\documentclass{elsarticle}
\usepackage{amssymb}
\usepackage{amsmath}
\usepackage{amsthm}
\usepackage{verbatim}
\usepackage{tikz}
\usepackage{pgfplots}
\usepackage{graphicx}
\usepackage{enumitem}
\usepackage{hyperref}
\usetikzlibrary{calc,patterns,shapes,plotmarks,datavisualization}
\newcommand{\ZZ}{\mathbb{Z}}
\newcommand{\NN}{\mathbb{N}}
\newcommand{\RR}{\mathbb{R}}

\newcommand{\CC}{\mathbb{C}}

\DeclareMathOperator{\Bin}{Bin}

\newtheorem{theorem}{Theorem}[section]
\newtheorem*{theorem*}{Theorem}
\newtheorem*{claim*}{Claim}
\newtheorem{prop}[theorem]{Proposition}
\newtheorem{cor}[theorem]{Corollary}
\newtheorem{lem}[theorem]{Lemma}

\newtheorem{conj}[theorem]{Conjecture}
\theoremstyle{definition}
\newtheorem{definition}[theorem]{Definition}
\newtheorem{remark}[theorem]{Remark}
\newtheorem{example}[theorem]{Example}

\title{Random Toric Surfaces and a Threshold for Smoothness}
\fntext[thks]{The author was partially supported by an NSF grant DMS-1502553.\\ \textcopyright 2019. This manuscript version is made available under the CC-BY-NC-ND 4.0 license http://creativecommons.org/licenses/by-nc-nd/4.0/ }
\cortext[cor1]{Corresponding Author}
\address[uw]{Department of Mathematics, University of Minnesota, Minneapolis, MN}
\author[uw]{Jay Yang\corref{cor1}\fnref{thks}}
\ead{yangjay@math.wisc.edu}

\begin{document}
\begin{abstract}
  We present a notion of a \emph{random toric surface} modeled on a notion of a random graph. We then study some threshold phenomena related to the smoothness of the resulting surfaces.
\end{abstract}
\maketitle

\section{Introduction}

When random constructions have been used in other fields, including Graph Theory~\cite{erdos,bollobas} and Topology~\cite{flagcomplex,triangulated}
, they have helped to gain insights into the nature of large graphs that would be difficult to approach via other techniques. And in Algebra, these techniques have yielded a rich theory of Random Groups\cite{randgroup}.

Previous work in Algebraic Geometry on the use of random constructions include uses of random (topological) surfaces to study Gromov-Witten theory~\cite{randsurf}, and more closely related to this paper, work by Winfried Bruns\cite{quest} giving an algorithm for constructing random toric varities for the purpose of constructing counterexamples. In contrast to his technique, we are less concerned with using randomness to exhibit particular counterexamples, rather we will study the distributions given by our construction. This allows us to use tools similar to those in the theory of random graphs to understand toric surfaces as a whole.

In this paper we define a notion of a \emph{random toric surface} using a technique similar to that used to define a random graph. In particular, normal toric surfaces are given by a fan in $\RR^2$, so if we can construct random fans we can construct random toric surfaces. To create a random fan, we start by randomly choosing rays; once we have chosen the rays, in $2$ dimensions, there exists a unique way to ``fill in'' the fan with $2$-cones (see Section~\ref{sec:notation}). As we will see in Definition~\ref{maindef} we will choose rays among those where the first lattice point on them is of magnitude at most $h$. Within this set, we choose whether to use a particular ray with probability $p$, allowing us to form a distribution $T(h,p)$ with two parameters. Here the $h$ parameter is analogous to the number of vertices in a random graph, and we consider the behavior of this distribution as $h\rightarrow \infty$.

We say a fan $\Sigma$ chosen with respect to $T(h,p)$ has a property \emph{with high probability}, if the probability that $\Sigma$ has the property goes to $1$ as $h$ goes to $\infty$. Now we can state our main result. For convenience, define $f\prec g$ for functions $f,g$ to mean $\lim_{h\rightarrow \infty} \frac{f(h)}{g(h)} = 0$.

\begin{theorem}
  \label{mainresult}
  Let $q$ be a function of $h$. Then for a fan $\Sigma$ chosen with respect to $T(h,1-q)$, we have the following behavior
  \begin{enumerate}[ref={\thetheorem(\arabic*)}]
  \item if $q\prec 1/h^2$ or $1-q\prec 1/h^2$ then with high probability $X(\Sigma)$ is smooth \label{mainresultpart1}
  \item if $q\succ 1/h^2$ and $1-q\succ 1/h^2$ then with high probability $X(\Sigma)$ is singular \label{mainresultpart2}
  \end{enumerate}
  In addition for $k>1$, then with $1-q\succ 1/h$ and $q\succ 1/h^2$, we have a singularity of index at least $k$.
\end{theorem}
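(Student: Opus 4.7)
The plan is to exhibit a large family of ``short'' primitive rays whose individual omission from the full fan $T(h, 1)$ creates a two-cone of determinant at least $k$, and to show that with high probability at least one such omission occurs.

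For each primitive $v \in \ZZ^2$ with $|v| \leq h/k$, I would first identify its two angular neighbors in $T(h, 1)$ (the fan of all primitives in the disk of radius $h$). Pick $w \in \ZZ^2$ with $\det(v, w) = 1$ and $|w|$ as small as possible, so $(v, w)$ is a lattice basis. Any primitive $u$ with $|\det(v, u)| = 1$ has the form $u = tv \pm w$ for $t \in \ZZ$; a direct angle computation shows the deviation of $u = tv + nw$ from $v$ scales as $|n|/(|t| \cdot |v|^2)$, so the angularly closest primitives to $v$ in the disk are obtained with $n = \pm 1$ and $t$ maximal subject to $|tv \pm w| \leq h$. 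Solving the quadratic yields $t = h/|v| - O(1)$. Writing $v_L = \alpha_L v - w$ and $v_R = \alpha_R v + w$ for these extremal values, the identity $v_L + v_R = (\alpha_L + \alpha_R)\, v$ gives $\det(v_L, v_R) = \alpha_L + \alpha_R =: \beta_v$, and the estimate $\alpha_L, \alpha_R \asymp h/|v|$ shows $\beta_v \geq k$ for $|v| \leq h/k$ and $h$ large.

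Next, for each such $v$, define the event $A_v = \{v \notin \Sigma\} \cap \{v_L, v_R \in \Sigma\}$. Since rays are chosen independently, $\Pr(A_v) = q(1-q)^2$. On $A_v$, the primitive $v$ was the unique lattice point of $T(h, 1)$ strictly between $v_L$ and $v_R$ in the angular order, so upon its removal $v_L, v_R$ become adjacent in $\Sigma$ and form a 2-cone of determinant $\beta_v \geq k$; thus $X(\Sigma)$ has a singularity of index at least $k$.

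Finally, let $V_k = \{v \in \ZZ^2 \text{ primitive} : |v| \leq h/k\}$. Standard lattice-point counting gives $|V_k| = \Theta((h/k)^2) = \Theta(h^2)$ for fixed $k$ as $h \to \infty$. For distinct $v, v' \in V_k$, the footprints $\{v, v_L, v_R\}$ and $\{v', v'_L, v'_R\}$ are disjoint, since the short primitives lie in the ball of radius $h/k$ while their Farey neighbors have magnitude $\asymp h$; hence the events $\{A_v\}_{v \in V_k}$ are mutually independent. Under the hypotheses $q \succ 1/h^2$ and $1-q \succ 1/h$, one checks $h^2 q(1-q)^2 = q \cdot (h(1-q))^2 \to \infty$ by splitting into the cases $q \to 0$ (where $(1-q)^2 \to 1$ and $q h^2 \to \infty$) and $q$ bounded away from $0$ (where $(h(1-q))^2 \to \infty$). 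Thus $\Pr\bigl(\bigcup_v A_v\bigr) = 1 - (1 - q(1-q)^2)^{|V_k|} \to 1$ as $h \to \infty$, completing the argument. The main obstacle will be the precise geometric identification in the first paragraph: rigorously verifying that $v_L, v_R$ really are the angularly closest primitives of $T(h, 1)$ requires controlling all lattice points $u = tv + nw$ in the disk and checking that $|n| \geq 2$ always yields strictly larger angular deviation from $v$ than $|n| = 1$; once this and the estimate $\beta_v \geq k$ are in hand, Steps 2 and 3 are routine probabilistic reasoning.
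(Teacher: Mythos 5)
Your proposal addresses only one of the three claims in the theorem. What you construct is exactly an argument for the third statement (the ``dense case'' in the paper's terminology: $q\succ 1/h^2$, $1-q\succ 1/h$). But the theorem also asserts (a) smoothness with high probability when $q\prec 1/h^2$ or $1-q\prec 1/h^2$, and (b) singularity in the full regime $q\succ 1/h^2$, $1-q\succ 1/h^2$, which includes a ``sparse'' subcase ($q\to 1$ with $1-q\succ 1/h^2$) where the expected number of rays is $o(h)$ and blowdown-of-a-single-ray singularities essentially never occur. The sparse case requires a completely different, global argument: for any ray $\rho$ with $|\rho|\geq h/2$ there are only $O(1)$ rays in $R$ which pair with $\rho$ to form a smooth cone (since such partners must satisfy $|\det(u_\rho, u_\tau)|=1$ and so lie on two lines parallel to $\rho$), and one shows that with high probability the leftmost selected ray in the upper half plane is exactly of this type and none of its smooth partners were selected. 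None of this is in your write-up, so as a proof of Theorem~\ref{mainresult} it is incomplete.

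Within the dense case your outline is essentially the paper's argument (cf.\ Proposition~\ref{bounds} and Corollary~\ref{singdensity}), but your independence claim has a concrete gap. You assert that the footprints $\{v,v_L,v_R\}$ are pairwise disjoint across all primitive $v$ with $|v|\leq h/k$, on the grounds that $v$ is short while $v_L,v_R$ are $\asymp h$. That argument rules out $v\in\{v'_L,v'_R\}$ but not $v_L = v'_R$ for two distinct short $v,v'$, and in fact this \emph{does} occur: in $\Sigma_{10}$ with $k=2$, the rays $(4,1),\,(9,2),\,(5,1)$ are angularly consecutive, $|(4,1)|=4$ and $|(5,1)|=5$ are both $\leq h/k=5$, and they share the neighbor $(9,2)$. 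So the events $A_{(4,1)}$ and $A_{(5,1)}$ are not independent and your product formula $1-(1-q(1-q)^2)^{|V_k|}$ does not hold. The fix is easy and is what the paper does: since a shared neighbor can be shared by at most two short rays, greedily thin $V_k$ to a subcollection of size $\geq |V_k|/4$ whose footprints really are disjoint; this preserves $m=\Theta(h^2)$ and the rest of your probability estimate $mq(1-q)^2\to\infty$ then goes through as written. As a final note, the geometric verification you flag as ``the main obstacle'' (that $v_L,v_R$ with $|n|=1$ are the true angular neighbors in $\Sigma_h$) is proved in the paper as the Claim inside Proposition~\ref{bounds}, via unimodularity of the cones of $\Sigma_h$; it is genuinely needed and not a routine gap.
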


See Definition~\ref{singindex} for the definition of singularity index and see Conjecture~\ref{stronger} for a slight strengthening. If we ignore the strengthening of the second statement in the case of $1-q\succ 1/h$ , this is analogous to the following familiar result from the theory of random graphs.

\begin{theorem*}[Erd\H{o}s - R\'enyi theorem\cite{erdos}]
  For any $\epsilon>0$ and for a graph $G$ chosen with respect to $G(n,p)$
  \begin{enumerate}
  \item if $p\geq \frac{(1+\epsilon)\log n}{n}$ Then with high probability $G$ is connected
  \item if $p\leq \frac{(1-\epsilon)\log n}{n}$ Then with high probability $G$ is disconnected
  \end{enumerate}
\end{theorem*}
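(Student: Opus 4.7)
The plan is to handle the two halves of the theorem separately via the classical first and second moment methods. For the subcritical regime $p \leq (1-\epsilon)\log n / n$, the obstruction to connectedness is simply the presence of isolated vertices; for the supercritical regime $p \geq (1+\epsilon)\log n / n$, I would show via a union bound that no nonempty proper vertex subset can be closed off from its complement.

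For part (2), let $X$ be the number of isolated vertices, written as $X = \sum_{i=1}^n X_i$ with $X_i$ the indicator that vertex $i$ has no incident edges. Then $\mathbb{E}[X] = n(1-p)^{n-1}$, and using $(1-p)^{n-1} \geq \exp(-p(n-1) - O(np^2))$ together with $p \leq (1-\epsilon)\log n / n$ gives $\mathbb{E}[X] \geq n^{\epsilon - o(1)} \to \infty$. To upgrade this first-moment estimate to $P(X \geq 1) \to 1$, I would compute the covariance: for $i \neq j$, the joint event $\{X_i = X_j = 1\}$ is determined by $2n-3$ distinct potential edges (the edge $ij$ is shared between the two stars), so $\mathrm{Cov}(X_i, X_j) = (1-p)^{2n-3} - (1-p)^{2n-2} = p(1-p)^{2n-3}$. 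Summing yields $\mathrm{Var}(X)/\mathbb{E}[X]^2 \leq 1/\mathbb{E}[X] + p/(1-p) \to 0$, and Chebyshev then gives $P(X = 0) \to 0$, so with high probability $G$ has an isolated vertex and is disconnected.

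For part (1), observe that $G$ is disconnected if and only if there exists a nonempty $S \subset V$ with $|S| \leq n/2$ such that no edge crosses between $S$ and $V \setminus S$. Union-bounding over the choice of $S$,
\[
P(G \text{ disconnected}) \;\leq\; \sum_{k=1}^{\lfloor n/2 \rfloor} \binom{n}{k} (1-p)^{k(n-k)}.
\]
Using $\binom{n}{k} \leq (en/k)^k$ and $1-p \leq e^{-p}$, the $k$-th term is at most $\exp\bigl(k\log(en/k) - pk(n-k)\bigr)$. With $p \geq (1+\epsilon)\log n/n$, the $k=1$ term is at most $n(1-p)^{n-1} \leq n^{-\epsilon + o(1)} \to 0$. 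For the remaining range I would split at $k = \log n$: for $2 \leq k \leq \log n$ the bound $n - k \geq n/2$ forces the $(1-p)^{k(n-k)}$ factor to decay like $n^{-(1+\epsilon)k/2}$, easily dominating $(en/k)^k$; for $\log n < k \leq n/2$ the product $pk(n-k)$ exceeds $\tfrac{1}{2}(1+\epsilon) k \log n$, again overwhelming the entropy $k \log(en/k) \leq k \log(en)$. Summing termwise keeps the total $o(1)$, so with high probability $G$ is connected.

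The main obstacle is the bookkeeping in the union-bound argument of part (1), specifically the split over $k$ needed to confirm that the $k=1$ term really is the dominant contribution and that the middle-range $k$'s do not conspire to exceed a constant. The second-moment computation in part (2) is straightforward once one identifies the $p(1-p)^{2n-3}$ form of the covariance; both the $1/\mathbb{E}[X]$ and the $p/(1-p)$ contributions to the variance-to-squared-mean ratio vanish as $n \to \infty$ under the hypothesis on $p$.
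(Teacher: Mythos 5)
The paper does not actually prove this statement---it is quoted from \cite{erdos} purely as motivation for Theorem~1.1---so your argument has to be judged on its own merits rather than against a proof in the text. Your part (2) is correct and is the standard second-moment argument: the joint probability $(1-p)^{2n-3}$, the covariance $p(1-p)^{2n-3}$, the bound $\mathrm{Var}(X)/\mathbb{E}[X]^2 \le 1/\mathbb{E}[X] + p/(1-p)$, and the conclusion via Chebyshev all check out under $p\le (1-\epsilon)\log n/n$.

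Part (1), however, has a genuine quantitative gap exactly at the step you flag as the main obstacle. In the range $2\le k\le \log n$ you replace $n-k$ by $n/2$, giving the termwise bound $(en/k)^k\,n^{-(1+\epsilon)k/2}$; for $k=2$ this is of order $n^{1-\epsilon}$, which diverges for every $\epsilon<1$, so the claimed ``easy domination'' of $(en/k)^k$ is false in the only interesting regime (small $\epsilon$). The same failure occurs just above your cutoff: for $k$ slightly larger than $\log n$ the entropy $k\log(en/k)$ is about $k\log n$, which is not overwhelmed by $\tfrac12(1+\epsilon)k\log n$ when $\epsilon<1$; the net exponent is about $\tfrac{1-\epsilon}{2}\,k\log n>0$ and those terms blow up too. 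The repair is standard: the lossy estimate $n-k\ge n/2$ is only affordable when $k$ is proportional to $n$, because there the entropy per vertex $\log(en/k)$ is $O(\log\log n)$ rather than $\log n$. Split instead at $k_0=n/\log n$. For $2\le k\le k_0$ use $n-k\ge n(1-1/\log n)$, so each term is at most $\bigl(en\cdot n^{-(1+\epsilon)(1-1/\log n)}\bigr)^k=\bigl(e^{2+\epsilon}\,n^{-\epsilon}\bigr)^k\le \bigl(n^{-\epsilon/2}\bigr)^k$ for large $n$, and the sum over this range is $O(n^{-\epsilon})$. For $n/\log n<k\le n/2$ use $en/k\le e\log n$ together with $n-k\ge n/2$, so each term is at most $\bigl(e\log n\cdot n^{-(1+\epsilon)/2}\bigr)^k\le 2^{-k}\le 2^{-n/\log n}$ eventually, and that tail is $o(1)$ even after multiplying by the number of terms. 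With this split (rather than at $k=\log n$), and your correct treatment of the $k=1$ term, the union bound does give $o(1)$ and part (1) follows.
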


The content of this paper largely revolves around Theorem~\ref{mainresultpart2}. That part itself is proved in two overlapping cases, which we will call the \emph{dense} ($q\succ 1/h^2$ and $1-q\succ 1/h$) and \emph{sparse} case ($1-q\succ 1/h^2$ and $q\rightarrow 1$). This phrasing is motivated by noting that in the first case the number of rays grows at least linearly, and in the second case, the number of rays is always asymptotically less than the set of possible rays. As will be seen, the two cases behave quite differently as to the question of why a random fan is singular. In particular we will have two overlapping cases in our proof which together cover the second part of our theorem.

In contrast, for Theorem~\ref{mainresultpart1} the proof reduces to showing that a particular fan occurs with high probability. In particular, for $q\prec 1/h^2$ we get the complete fan, $\Sigma_h$, as illustrated in Example~\ref{sigmah}. For $1-q\prec 1/h^2$, we get the empty fan, which corresponds to the toric surface $(\CC^{*})^2$.

We should also note that as an immediate consequence of our theorem, we have that for a fixed $p$, random fans are singular.

\begin{cor}
  For fixed $p$ with $0<p<1$ then for a fan $\Sigma$ chosen with respect to $T(h,p)$, with high probability $X(\Sigma)$ is singular.
\end{cor}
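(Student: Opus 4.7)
The plan is to derive this as an immediate specialization of Theorem~\ref{mainresultpart2}. Writing the distribution $T(h,p)$ in the form used by the main theorem, I set $q = 1-p$ so that $T(h,p) = T(h,1-q)$; since $p$ is a fixed constant in $(0,1)$, both $q$ and $1-q$ are fixed constants strictly between $0$ and $1$.

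To invoke Theorem~\ref{mainresultpart2} I only need to verify the two asymptotic hypotheses $q \succ 1/h^2$ and $1-q \succ 1/h^2$. Unpacking the $\prec$ notation defined in the excerpt, $f \succ g$ amounts to $g/f \to 0$, so $q \succ 1/h^2$ is the statement that $1/(q h^2) \to 0$, equivalently $q h^2 \to \infty$. Since $q$ is a positive constant independent of $h$, $q h^2 \to \infty$ is trivial, and the analogous computation handles $1-q$. Applying Theorem~\ref{mainresultpart2} then yields that $X(\Sigma)$ is singular with high probability.

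There is no genuine obstacle in this proof; the corollary is just the ``constant probability'' corner of the singular regime of Theorem~\ref{mainresult}. The only thing worth a sentence of commentary is that the corollary highlights why the interesting content of the main theorem lies at probabilities that decay with $h$: in the parameter range where $p$ does not scale with $h$, the large-$h$ behavior is forced into the singular regime by the sheer number of rays sampled.
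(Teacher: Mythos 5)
Your proposal is correct and matches the paper's reasoning exactly: the paper also treats this corollary as an immediate specialization of Theorem~\ref{mainresultpart2}, since for constant $q = 1-p \in (0,1)$ the hypotheses $q \succ 1/h^2$ and $1-q \succ 1/h^2$ hold trivially. Your unpacking of the $\prec$ notation is accurate, and there is nothing to add.
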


We might ask further questions about how many singularities a random toric surface has. As it turns out we can always ensure that there are arbitrarily many of these singularities. But in the case of normal toric surfaces, singularities can only occur at torus fixed points, so we can actually ask a more refined question, namely what proportion of the torus fixed points are actually singular. Let $\delta_k$ be the density of fixed points of singularity index at least $k$ as defined in Section~\ref{sec:density}.

\begin{theorem}
  \label{density}
  Fix some $k>1$, then for a fan $\Sigma$ chosen with respect to $T(h,1-q)$ if $0<q<1$ and $c_k>0$ sufficiently small, then with high probability, $\delta_k(\Sigma)>c_k$.
\end{theorem}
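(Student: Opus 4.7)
The plan is to establish Theorem~\ref{density} via a second moment argument. Write $N(\Sigma)$ for the total number of 2-cones of $\Sigma$ and $N_k(\Sigma)$ for the number of those with singularity index at least $k$, so that $\delta_k = N_k/N$. Since the set of primitive rays of norm at most $h$ has cardinality $\Theta(h^2)$, the deterministic bound $N \leq \Theta(h^2)$ is immediate; a Chernoff bound applied to the $\Theta(h^2)$ independent Bernoulli$(1-q)$ indicators defining $\Sigma$ further gives $N = \Theta(h^2)$ with high probability, and in particular $N \leq C_0 h^2$ w.h.p. It thus suffices to show $N_k = \Omega(h^2)$ with high probability.

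For the lower bound on $E[N_k]$, I would fix a constant $M = M(k)$ to be chosen and let $\mathcal{G}$ be the set of primitive vectors both of whose coordinates have absolute value in $[h/10, h/2]$; an elementary count of primitive lattice points gives $|\mathcal{G}| = \Omega(h^2)$. For each $v \in \mathcal{G}$, list the subsequent primitives of norm at most $h$ in counterclockwise order as $u_1(v), u_2(v), \ldots$, and let $A_v$ be the event that $v$ and $u_M(v)$ are chosen while $u_1(v), \ldots, u_{M-1}(v)$ are not. Then $\Pr(A_v) = (1-q)^2 q^{M-1}$, a positive constant in $h$, and when $A_v$ occurs $v$ and $u_M(v)$ are consecutive chosen rays of $\Sigma$, hence bound a 2-cone. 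Because any two distinct primitive vectors of norm at most $h$ satisfy $|\det| \geq 1$, each successive angular gap in the sequence $v, u_1(v), \ldots, u_M(v)$ is at least $1/h^2$, so the total angular gap $\theta$ satisfies $\theta \geq M/h^2$. A local Farey argument (discussed in the last paragraph) shows that for $v \in \mathcal{G}$ the vector $u_M(v)$ has coordinates comparable to those of $v$, hence $\|u_M(v)\| \geq h/10$ and $\theta \leq \pi/2$, so
\[
|\det(v, u_M(v))| = \|v\|\,\|u_M(v)\|\sin\theta \geq \frac{h^2}{100}\cdot \frac{2M}{\pi h^2} = \frac{M}{50\pi},
\]
which exceeds $k$ once $M \geq 50\pi k$. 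Summing over $\mathcal{G}$ gives $E[N_k] \geq |\mathcal{G}|(1-q)^2 q^{M-1} = \Omega(h^2)$.

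For concentration I would apply the second moment method to $N_k' := \sum_{v \in \mathcal{G}} \mathbf{1}_{A_v}$, which is a lower bound on $N_k$ (distinct $v \in \mathcal{G}$ contribute distinct 2-cones, since $v$ is the ``left'' ray of its cone). Each indicator $\mathbf{1}_{A_v}$ depends only on the $M+1$ Bernoulli variables at the rays in the window $\{v, u_1(v), \ldots, u_M(v)\}$, so $\mathbf{1}_{A_v}$ and $\mathbf{1}_{A_{v'}}$ are independent whenever these windows are disjoint; each $v$ has only $O(M) = O(1)$ other $v' \in \mathcal{G}$ whose window overlaps, and for each overlapping pair $\mathrm{Cov}(\mathbf{1}_{A_v}, \mathbf{1}_{A_{v'}}) \leq \Pr(A_v)$. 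Therefore $\mathrm{Var}(N_k') = O(h^2)$, and Chebyshev's inequality gives $N_k' \geq \tfrac{1}{2}E[N_k'] = \Omega(h^2)$ with probability $1 - O(1/h^2) \to 1$. Dividing by the w.h.p.\ upper bound $N \leq C_0 h^2$ yields $\delta_k \geq c$ with high probability, for an explicit $c = c(q, k) > 0$.

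The main obstacle I anticipate is the local Farey claim: that for $v \in \mathcal{G}$, the subsequent primitives $u_1(v), \ldots, u_M(v)$ remain close to $v$ in both angle and norm. A priori a Farey-neighbor sequence can jump to a vector of small norm, as happens near the axes (for example, the next counterclockwise primitive of norm $\leq h$ after $v = (1, \lfloor h\rfloor - 1)$ is $(0, 1)$ of norm $1$). The bulk restriction in the definition of $\mathcal{G}$, bounding both coordinates away from $0$ and from $h$, is designed to exclude such axis-adjacent pathologies; making this rigorous uses the standard observation that Farey mediants strictly increase Euclidean norm, which localizes the Farey structure near a bulk $v$ to vectors of comparable norm.
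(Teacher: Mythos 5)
Your architecture is genuinely different from the paper's. The paper relies on Corollary~\ref{singdensity} and Proposition~\ref{bounds}: it locates a positive-density set of rays $\rho$ whose single-ray blowdown from $\Sigma_h$ already has index $\geq k$, selects $\Theta(h^2)$ of them with pairwise-disjoint neighbor pairs, and then applies a Hoeffding tail bound to the resulting binomial. You instead manufacture singular cones directly by deleting an $(M{-}1)$-ray window around a bulk vector and run a second-moment argument. That is a legitimate alternative route, and your independence/variance bookkeeping and the division by the deterministic bound on the number of $2$-cones are fine. But there is a real gap in the step where you claim the cone produced by $A_v$ has index $\geq k$.

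The issue is precisely the ``local Farey claim'' you flag, and it is not merely an unproved technicality --- as stated it is false, and the quantity you actually need, $\lvert\det(v,u_M(v))\rvert\geq k$, can fail for any fixed $M$. The Farey-neighbor sequence $u_0=v,u_1,u_2,\dots$ satisfies $d_j:=\det(v,u_j)$ with $d_{j+1}=c_jd_j-d_{j-1}$; this sequence is not monotone and can return to $1$. Concretely, take $h=40$ and $v=(19,9)\in\mathcal G$ (both coordinates in $[4,20]$). Walking counterclockwise through the primitives of norm at most $40$ one finds $u_1=(40,19)$, $u_2=(21,10)$, $u_3=(23,11),\dots,u_{11}=(39,19)$, and then $u_{12}=(2,1)$, which has norm $2<h/10$ and satisfies $\det\bigl((19,9),(2,1)\bigr)=1$. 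So with $M=12$ the event $A_v$ produces a \emph{smooth} cone. The mechanism --- starting from a low-norm ``ancestor'' $w$ with $\det(v,w)=1$ and populating the thin cone between $v$ and $w$ with roughly $h^2\cdot\tfrac{1}{\|v\|\|w\|}$ primitives of norm $\leq h$ --- scales to arbitrary $h$ and any prescribed $M$: choose $\|w\|$ on the order of $h/M$. The heuristic ``mediants increase norm'' does not prevent the Farey path from a norm-$h/2$ vector to a norm-$h/M$ vector from having length $O(M)$.

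The repair is to let the window length depend on $v$. Fix $v\in\mathcal G$. The points $w$ with $\|w\|\leq h$ and $0<\det(v,w)<k$ lie on $k-1$ lines parallel to $v$, and because $v\in\mathcal G$ forces $\lvert v_x\rvert,\lvert v_y\rvert\geq h/10$, each such line meets the box $\{\|\cdot\|\leq h\}$ in at most $2h/(h/10)+1=21$ lattice points; hence that set has at most $21(k-1)$ elements. Since the $u_j$ with $\det(v,u_j)<k$ all lie in this set, $M(v):=\min\{m:\det(v,u_m(v))\geq k\}$ satisfies $M(v)\leq 21(k-1)+1=:\bar M$. Now define $A_v$ with window $\{v,u_1(v),\dots,u_{M(v)}(v)\}$; then $\Pr(A_v)=(1-q)^2q^{M(v)-1}\geq (1-q)^2q^{\bar M-1}$ is still a positive constant, all windows have size at most $\bar M+1=O(1)$, and your covariance/Chebyshev bookkeeping goes through unchanged. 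With that fix your proof is correct and gives an argument self-contained in the Farey geometry, at the cost of the lattice-point count above; the paper's route through $S_{\geq k}$ and Hoeffding avoids Farey combinatorics entirely but leans on the full strength of Proposition~\ref{bounds}.
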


This paper is organized as follows. We start with preliminaries in Section~\ref{sec:notation}. This is followed by a discussion of the geometry of fixed height fans in Section~\ref{sec:geometry}, which includes many of the main definitions. Next we consider what happens when we blow down along rays for these fans in Section~\ref{sec:sing}. These together form the basis of the proofs in Section~\ref{sec:threshold}. We end then with some further ideas and a conjecture in Section~\ref{sec:further}.

\section{Preliminaries}
\label{sec:notation}

We use the following relatively standard notation as seen in \emph{Toric Varieties}\cite{torvar}. $\Sigma$ is a fan, $\sigma$ is a cone in a fan and $\rho$ is a ray in a fan. $u_{\rho}$ is the minimal lattice point on $\rho$. $X(\Sigma)$ is the toric variety corresponding to $\Sigma$.

Recall that a normal toric surface can be given by a rational fan in $\RR^2$. By the orbit-cone correspondence, the $2$-dimensional cones in this fan correspond to fixed points under the torus action. Furthermore, on a normal toric surface, the only singularities occur at the fixed points. Thus our discussion of singularities on a toric surface comes down to discussing singular cones in rational fans in $\RR^2$. For a rational cone in $\RR^2$ given by rays $\rho,\tau$ with corresponding lattice points $u_{\rho}$ and $u_{\tau}$, the cone is smooth if and only if $\left|u_{\rho}\wedge u_{\tau}\right|=1$,  where the the value $\left|u_{\rho}\wedge u_{\tau}\right|$ is the absolute value of the determinant of the $2\times 2$ matrix with columns $u_{\rho}$ and $u_{\tau}$. 

\begin{definition}
  \label{singindex}
  We say that the fixed point on a toric surface corresponding to a cone given by $\rho$ and $\tau$ has \emph{singularity index} $\left|u_{\rho}\wedge u_{\tau}\right|$.
\end{definition}

On a toric surface, for a cone of singularity index k, the singularity is of the form of a quotient by $\ZZ/k\ZZ$. Also, blowing up this singularity gives an exceptional divisor with self intersection number $-k$. The second point will be important, because in Section~\ref{sec:sing} we will use rays of particular self intersection numbers to yield singularities of particular singularity index \cite{torvar}. 

Much of the work here requires that we fix some norm on $\ZZ^2$, for our purposes we will use $\left|(x,y)\right|=\max\{\left|x\right|,\left|y\right|\}$, but essentially all of the proofs can be extended to any norm without any major changes. In addition, as a shorthand for a ray $\rho$, we write $\left|\rho\right|=\left|u_{\rho}\right|$.

\begin{definition}
  \label{completedef}
  Given a finite collection of rays $S$ from $\ZZ^2$, we define the \emph{completion of a set of rays} as the fan $\Sigma$ with $\Sigma(1)=S$ such that $\Sigma$ is maximal w.r.t. inclusion among all fans with this property. 
\end{definition}

At a practical level, this construction simply takes a collection of rays and ``fills in'' every possible cone, as seen in Figure~\ref{fig:fill}. Notice that this construction is completely dependent on $2$ dimensions, and is a large obstacle to the construction of analogous higher dimensional ideas.

\begin{figure}
  \begin{center}
  \begin{tikzpicture}
    \draw[->] (0,0) -- (1,1);
    \draw[->] (0,0) -- (-1,1);
    \draw[->] (0,0) -- (1,0);

    \draw[->,line width=2] (2,0.5) -- (3,0.5);
    
    \fill[gray] (5,0) -- (6,1) -- (4,1) -- cycle;
    \fill[gray] (5,0) -- (6,1) -- (6,0) -- cycle;
    \draw[->] (5,0) -- (6,1);
    \draw[->] (5,0) -- (4,1);
    \draw[->] (5,0) -- (6,0);
    
  \end{tikzpicture}
  \end{center}
  \caption{\label{fig:fill}To fill construction the completion of a set of rays shown above, as described in Definition~\ref{completedef}, we have to add two cones.}
\end{figure}
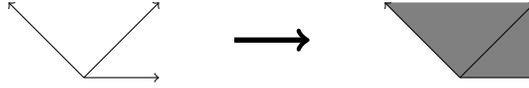

\section{Geometry of Fixed Height Fans}
\label{sec:geometry}

We can now define the distribution of interest.

\begin{definition}
  \label{maindef}
  Let $R$ be the set of rays $\rho\subset \RR^2$ where $\left|\rho\right|\leq h$. We define \emph{the distribution $T(h,p)$} over fans formed by choosing rays in $R$ with probability $p$, and then completing it to a fan.
\end{definition}

\begin{example}
  \label{sigmah}
  In the case where we take the empty set of rays, the construction in Definition~\ref{completedef} gives the empty fan. Recall in particular that the empty fan corresponds to the variety $(\CC^*)^2$.

  In the opposite extreme, where we take the whole set $R$ from Definition~\ref{maindef}, we will denote the resulting fan $\Sigma_h$. As we will see in Lemma~\ref{completesmooth}, this fan is always smooth.
\end{example}

It will turn out to be useful to know that the number of rays of height at most $h$ is proportional to $h^2$.

\begin{lem}
  \label{raycount}
  Let the number of rays $\rho$ such that $\left|\rho\right|\leq h$ be $N_h $. Then \[\lim_{h\rightarrow \infty} \frac{N_h}{h^2}=\frac{4}{\zeta(2)}\approx 2.43.\] where $\zeta$ is the Riemann zeta function.
\end{lem}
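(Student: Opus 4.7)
My plan is to translate the ray-counting problem into lattice-point counting. A ray $\rho \subset \RR^2$ with $|\rho| \leq h$ is determined uniquely by its minimal lattice point $u_\rho = (x,y)$, which is a primitive vector, i.e. $\gcd(x,y) = 1$. So $N_h$ equals the number of primitive nonzero lattice points in the box $\{(x,y) \in \ZZ^2 : \max(|x|,|y|) \leq h\}$. The upper bound is immediate: $N_h \leq (2h+1)^2 - 1 = O(h^2)$, which already gives $\limsup N_h/h^2 < \infty$.

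For the exact asymptotic, I would use M\"obius inversion on the $\gcd$. Every nonzero lattice point $(x,y)$ in the box factors uniquely as $d \cdot (x',y')$ with $d = \gcd(x,y) \geq 1$ and $(x',y')$ primitive, and $\max(|x'|,|y'|) \leq \lfloor h/d \rfloor$. Stratifying by $d$ gives the identity
$$(2h+1)^2 - 1 \;=\; \sum_{d=1}^{h} N_{\lfloor h/d \rfloor},$$
and M\"obius inversion yields
$$N_h \;=\; \sum_{d=1}^{h} \mu(d) \Bigl[(2\lfloor h/d \rfloor + 1)^2 - 1\Bigr].$$

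Finally, I would extract the leading term. Writing $(2\lfloor h/d\rfloor + 1)^2 - 1 = 4h^2/d^2 + O(h/d)$ and summing, the right hand side becomes
$$4h^2 \sum_{d=1}^{h} \frac{\mu(d)}{d^2} \;+\; O(h \log h).$$
Using the standard fact $\sum_{d=1}^{\infty} \mu(d)/d^2 = 6/\pi^2$, we conclude $N_h / h^2 \to 24/\pi^2$, which is strictly positive and finite.

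There is no real obstacle here; the argument is a standard density-of-primitive-points computation. If one wanted to avoid invoking $\zeta(2)$, a cruder lower bound of the form $N_h \geq c h^2$ would also suffice for the stated claim: for example, pairs $(x,y)$ with $1 \leq y < x \leq h$ and $x$ prime contribute roughly $h^2/(2\log h) \cdot (1+o(1))$ primitive points, giving $\liminf N_h/h^2 > 0$ directly, without needing the exact constant.
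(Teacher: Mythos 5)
Your argument is correct and essentially matches the paper's: both reduce the problem to counting primitive lattice points in the box $\max(|x|,|y|) \le h$ and obtain the density $1/\zeta(2)$, so that $N_h/h^2 \to 4/\zeta(2) = 24/\pi^2$. The only difference is that the paper restricts to the first quadrant and cites the coprimality density as a standard fact, while you spell out the M\"obius inversion (and add an optional cruder lower bound); this is more detail along the same route, not a different proof.
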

\begin{proof}
  For each $\rho$, we can consider the lattice point $u_{\rho}$. Let $u_{\rho}=(x,y)$. Then we have $\gcd(x,y)=1$. This gives a bijective correspondence between pairs of integers $(x,y)$ with $\gcd(x,y)=1$, and rays $\rho$. Restricting to the case where $x$ and $y$ are both positive reduces the problem to counting the proportion of pairs of positive integers $(x,y)$ with $x,y\leq h$ such that $\gcd(x,y)=1$. Asymptotically, this proportion is given by $\frac{1}{\zeta(2)}$. Finally, since the restriction to $x,y>0$ simply counts the first quadrant, there are $4$ times as many rays overall. Thus we have \[\lim_{h\rightarrow \infty} \frac{N_h}{h^2}=\frac{4}{\zeta(2)}\]

\end{proof}

\begin{lem}
  \label{completesmooth}
  $\Sigma_h$ is smooth.
\end{lem}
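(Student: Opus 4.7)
The plan is to reduce the claim, via symmetry, to a fact about the Farey sequence. The set $R$ of all rays of height at most $h$ is invariant under the dihedral group of order $8$ generated by the reflections $(x,y)\mapsto(y,x)$ and $(x,y)\mapsto(-x,y)$, since the max norm is. Each generator sends a $2$-cone in $\Sigma_h$ to another $2$-cone and preserves $|u_\rho\wedge u_\tau|$ up to sign. Therefore it suffices to prove smoothness for the $2$-cones contained in a single closed octant, say $C = \{(x,y):0\le y\le x\}$, together with those cones that straddle the axis $\{y=0\}$ and the diagonal $\{y=x\}$; the remaining cones are images of these under the group.

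Within $C$, the rays of $\Sigma_h$ correspond bijectively to pairs $(x,y)$ with $1\le x\le h$, $0\le y\le x$, and $\gcd(x,y)=1$; equivalently, to the Farey sequence $F_h$ of order $h$ via $(x,y)\leftrightarrow y/x\in[0,1]$. Consecutive rays $\rho,\tau$ in $C$ with $u_\rho=(q,p)$ and $u_\tau=(q',p')$ (and $p/q<p'/q'$) form a $2$-cone of $\Sigma_h$, and $|u_\rho\wedge u_\tau|=|pq'-p'q|$. So the smoothness of every such cone is precisely the classical Farey adjacency identity: consecutive terms $p/q<p'/q'$ of $F_h$ satisfy $p'q-pq'=1$. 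I would quote this fact.

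The remaining step is to handle the $2$-cones on the octant boundaries. These are generated by the rays $(1,0),(h,1)$ across $\{y=0\}$ and by $(h,h-1),(1,1),(h-1,h)$ across $\{y=x\}$, plus their images under the group. Each of these is a direct determinant computation: for instance $|{(1,0)}\wedge{(h,1)}|=1$, $|{(h,h-1)}\wedge{(1,1)}|=|h-(h-1)|=1$, and $|{(1,1)}\wedge{(h-1,h)}|=|h-(h-1)|=1$, and similarly at the other axis/diagonal. Applying the symmetry then covers every $2$-cone of $\Sigma_h$.

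There is no real obstacle; the only thing that requires care is organizing the case analysis cleanly so that the symmetry argument genuinely covers all $2$-cones (including the edge cases $h=1$ and $h=2$, where some of the octants degenerate to single rays), and correctly identifying the rays of $\Sigma_h$ in $C$ with $F_h$ so that the Farey identity translates directly into smoothness.
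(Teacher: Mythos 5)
Your argument is correct but follows a genuinely different route from the paper's. You exploit the dihedral symmetry of the max-norm ball to reduce to one octant, identify the rays of $\Sigma_h$ there with the Farey sequence $F_h$, and invoke the classical Farey adjacency identity $p'q-pq'=1$. The paper instead argues directly and uniformly: given a cone of $\Sigma_h$ spanned by $u_\rho,u_\tau$, any nonzero lattice point $v$ in the fundamental triangle other than $u_\rho,u_\tau$ would, by convexity of the norm ball, satisfy $|v|\le\max\{|u_\rho|,|u_\tau|\}\le h$, hence span a ray of $\Sigma_h$ strictly inside the cone---a contradiction; by Pick's theorem this is exactly the area-$1$, i.e.\ smoothness, condition. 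The paper's proof needs no symmetry reduction, no case analysis, and no external citation, and works verbatim for any norm; yours makes the pleasant $\Sigma_h$--Farey dictionary explicit, at the cost of being tied to the max norm. One small conceptual slip in your write-up: no $2$-cone of $\Sigma_h$ actually \emph{straddles} $\{y=0\}$ or $\{y=x\}$, since $(1,0)$ and $(1,1)$ are themselves rays of the fan; the cones you single out (e.g.\ $\langle(1,0),(h,1)\rangle$ and $\langle(h,h-1),(1,1)\rangle$) lie within a single octant and are already covered by the Farey identity applied to the adjacent pairs $0/1,1/h$ and $(h-1)/h,1/1$, so those extra determinant checks are harmless but redundant rather than a genuinely separate case.
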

\begin{proof}
  Recall that by Proposition~11.1.2 in \cite{torvar} it suffices to check that the variety is smooth at each of the torus fixed points. Furthermore, by Proposition~1.2.16 in \cite{torvar}, we can phrase smoothness as the area of the fundamental parallelogram formed by the two vectors corresponding to the rays forming the fans. By Pick's Theorem, this parallelogram has area $1$ if and only if the triangle formed by its fundamental rays contains no lattice points aside from the verticies.

  Now take a cone $\sigma$ formed by rays $\rho,\tau$. Suppose there existed some non-zero lattice point in the triangle formed by $u_{\rho}$ and $u_{\tau}$. Let this lattice point be $v$; since $v$ is in the triangle formed by $u_{\rho}$ and $u_{\tau}$, it is clear that $\left|v\right|<max\{|u_{\rho}|,|u_{\tau}|\}\leq h$. Consider the ray $\gamma = span(v)$. Since $\left|\gamma\right|\leq h$, $\gamma$ must be in the fan $\Sigma$. But by construction $\gamma\subset \sigma$. so $\gamma$ must be one of the two boundary rays. Thus the only lattice points on the triangle are $u_{\rho}$, $u_{\tau}$, and $0$. Thus every cone in $\Sigma_h$ is smooth which implies $\Sigma_h$ is smooth.

\end{proof}

To see this in the case of $h=3$, see Figure~\ref{fig:complete}. As noted, each of the triangles formed has area $1/2$ since they contain no interior points.

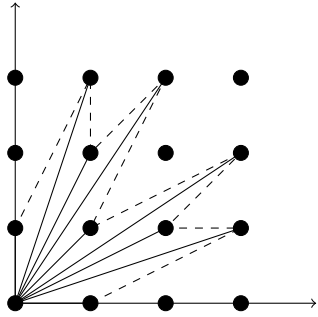
\begin{figure}
  \begin{center}
  \begin{tikzpicture}
    \foreach \x in {0,1,...,3}{
      \foreach \y in {0,1,...,3}{
        \node[draw,circle,inner sep=2pt,fill] at (\x,\y) {};
      }
    }
    \draw[dashed] (1,0) -- (3,1) -- (2,1) -- (3,2) -- (1,1) -- (2,3) -- (1,2) -- (1,3) -- (0,1);
    \draw (0,0) -- (1,0);
    \draw (0,0) -- (3,1);
    \draw (0,0) -- (2,1);
    \draw (0,0) -- (3,2);
    \draw (0,0) -- (1,1);
    \draw (0,0) -- (2,3);
    \draw (0,0) -- (1,2);
    \draw (0,0) -- (1,3);
    \draw (0,0) -- (0,1);
    
    \draw[->] (0,0) -- (0,4);
    \draw[->] (0,0) -- (4,0);
    
  \end{tikzpicture}
  \end{center}
  \caption{
    Here the solid lines represent the rays in the first quadrant of the fan $\Sigma_3$. Each of the triangles formed by a pair of neighboring rays, whose third side is show by a dashed line, contains no interior points. This gives us the smoothness of $\Sigma_3$.}
  \label{fig:complete}
\end{figure}

\section{Singularities From a Single Blowdown}
\label{sec:sing}

In the dense case, it suffices to consider those singularities that come from removing a single ray from the complete fan $\Sigma_h$. This is exactly the process that gives the blowdown of a toric surface along a ray. From this observation, we define the following two sets.

\begin{definition}
  Let $X=X(\Sigma_h)$ and let $\phi_{\rho}$ be the map that blows down along the ray $\rho$. We define
  \begin{itemize}
    \item $\displaystyle S_k=\left\{\rho \mid \phi_{\rho}\left(X\right)\text{ has a singularity of index }k\right\}$
    \item $\displaystyle S_{\geq k}=\left\{\rho \mid \phi_{\rho}\left(X\right)\text{ has a singularity of index of at least }k\right\}$
  \end{itemize}
\end{definition}

The following property is crucial for our main result.

\begin{cor}
  \label{singdensity}
  For $k\geq 1$ The set $S_{\geq k}$ has positive density in the limit $h\rightarrow \infty$.
\end{cor}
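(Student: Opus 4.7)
The plan is to realize the singularity index of $\phi_\rho(X)$ as a simple integer invariant $a_\rho$ attached to $\rho$, and then count the rays with $a_\rho\ge k$. Since $\Sigma_h$ is smooth by Lemma~\ref{completesmooth}, every two adjacent rays $\rho,\rho'$ satisfy $|u_\rho\wedge u_{\rho'}|=1$; hence if $\rho_-,\rho_+$ denote the two neighbors of $\rho$ in $\Sigma_h$, the basis $(u_\rho,u_{\rho_+})$ and $|u_{\rho_-}\wedge u_\rho|=1$ together produce a positive integer $a_\rho$ with
\[ u_{\rho_-}+u_{\rho_+}=a_\rho\, u_\rho.\]
Expanding
\[u_{\rho_-}\wedge u_{\rho_+}=(a_\rho u_\rho-u_{\rho_+})\wedge u_{\rho_+}=a_\rho\,(u_\rho\wedge u_{\rho_+})=\pm a_\rho\]
identifies the singularity index at the merged cone in the fan of $\phi_\rho(X)$ as exactly $a_\rho$, so $\rho\in S_{\ge k}$ iff $a_\rho\ge k$.

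The key estimate is $a_\rho>2h/|u_\rho|-2$. By the dihedral symmetry of $\Sigma_h$ I may assume $u_\rho=(p,q)$ lies in the octant $0<p\le q$, so $|u_\rho|=q$. Since $u_\rho\wedge u_{\rho_+}=\pm 1$ with $\rho_+$ on the larger-slope side, $\rho_+$ is the primitive lattice point $(a_+,b_+)\in[0,h]^2$ satisfying $pb_+-qa_+=1$ with $b_+$ maximal. The integer solutions to $pb-qa=1$ form an arithmetic progression $(a_0+tp,\,b_0+tq)$, and since $q\ge p$ the binding constraint in $[0,h]^2$ is $b\le h$, forcing $b_+\in(h-q,h]$. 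Symmetrically $b_-\in(h-q,h]$. Reading off the second coordinate of $u_{\rho_-}+u_{\rho_+}=a_\rho u_\rho$ gives $b_++b_-=a_\rho q$, and hence $a_\rho>2h/q-2$.

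Consequently, any $\rho$ with $|u_\rho|\le 2h/(k+1)$ has $a_\rho\ge k$. The number of primitive lattice points $(p,q)$ with $0<p\le q\le 2h/(k+1)$ is asymptotic to a positive multiple of $h^2$ by the usual density of coprime pairs in a triangle; multiplying by $8$ for the dihedral symmetry of $\Sigma_h$ and dividing by $N_h\sim 4h^2/\zeta(2)$ from Lemma~\ref{raycount} yields $\liminf_{h\to\infty}|S_{\ge k}|/N_h>0$. The main technical obstacle is the uniform estimate $b_\pm\in(h-q,h]$ in the second paragraph; once that is in hand, the identification of $a_\rho$ with the singularity index and the final counting argument are routine.
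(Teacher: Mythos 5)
Your proof is correct in outline and takes a genuinely different route from the paper's. The paper deduces Corollary~\ref{singdensity} as a one-line consequence of Proposition~\ref{bounds}, which is proved by a soft angle/triangle-inequality argument that produces an error term $\epsilon(h)\to 0$; you instead prove the needed lower bound on the singularity index directly by analyzing the arithmetic progression of lattice points on the two lines $pb-qa=\pm1$. This is a more elementary and sharper argument: your bound $a_\rho>2h/|u_\rho|-2$ is exact (no $\epsilon$) and gives density at least $(2/(k+1))^2$ rather than the paper's $(2/(k+2))^2$. You are also right to reduce via dihedral symmetry and to observe that the singularity index of the cone created by contracting $\rho$ equals $a_\rho$ where $u_{\rho_-}+u_{\rho_+}=a_\rho u_\rho$; the $\wedge$-computation is exactly right, and the paper does the same reduction by citing Theorem~10.4.4 of \cite{torvar}.

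The one thing to fix is the justification for ``$b_\pm\in(h-q,h]$''; your argument as stated (``since $q\ge p$ the binding constraint is $b\le h$'') is not airtight, because the extremal lattice point on the progression may in fact be stopped by $a\le h$ rather than $b\le h$. Moreover the interval bound is not literally true: for $u_\rho=(1,1)$ one of the two neighbors has $b_-=h-1=h-q$, an endpoint. What saves you is the relation $pb_\pm-qa_\pm=\pm1$ itself. If the neighbor is stopped by the $a$-constraint, then $a_\pm\ge h-p+1$, and substituting into $pb_\pm = qa_\pm\pm1$ gives $pb_\pm\ge q(h-p+1)-1$, hence $b_\pm\ge h-q$ with strict inequality unless $p=q=1$; in the exceptional case $p=q=1$ the other neighbor has $b=h$ strictly, so $b_++b_-=a_\rho q > 2(h-q)$ still holds. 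So the conclusion $a_\rho>2h/|u_\rho|-2$ is correct, but you need a short case analysis (exit through the top vs.\ through the side) rather than the blanket assertion that the $b$-constraint binds. Once that is patched, the counting step (primitive pairs $(p,q)$ with $q\le 2h/(k+1)$ have positive density among all primitive pairs with $\max\le h$, by M\"obius/``$1/\zeta(2)$'' as in Lemma~\ref{raycount}) is routine and correct.
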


\begin{proof}
  Applying Proposition~\ref{bounds} gives that if $\frac{|\rho|}{h}<\frac{2}{k+2}$ then $\rho$ has singularity index at least $k$, and furthermore the set $\left\{\rho \middle| \frac{|\rho|}{h}<\frac{2}{k+2}\right\}$ has density $\left(\frac{2}{k+2}\right)^2$ inside of the set of rays. For some numeric tests on the actual density of such rays see Section~\ref{sec:dist}.
\end{proof}

This follows from the following stronger result which not only gives us the positive density condition but tells us where the lattice points corresponding to rays of a particular singularity index live.

\begin{prop}
  \label{bounds}
  Given any singularity index $k$, there exists $\epsilon=\epsilon(h)$, with $\epsilon\rightarrow 0$ as $h\rightarrow \infty$, such that $\rho\in S_k$ implies $\frac{2-\epsilon}{k+2}h \leq |u_{\rho}| \leq \frac{2}{k}h$
  
\end{prop}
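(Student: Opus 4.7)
The plan is to extract both bounds from the standard algebraic relation between a ray and its neighbors in a smooth fan. Let $\rho', \tau'$ be the two adjacent rays to $\rho$ in $\Sigma_h$. After blowing down $\rho$, these become the generators of a single cone whose singularity index is $k = |u_{\rho'} \wedge u_{\tau'}|$. Since $\Sigma_h$ is smooth by Lemma~\ref{completesmooth}, both $\{u_\rho, u_{\rho'}\}$ and $\{u_\rho, u_{\tau'}\}$ are $\ZZ$-bases of $\ZZ^2$. Expanding $u_{\tau'}$ in the basis $\{u_\rho, u_{\rho'}\}$ and using the determinant conditions yields the identity $u_{\rho'} + u_{\tau'} = k u_\rho$.

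The upper bound is then immediate from the triangle inequality: $k|u_\rho| = |u_{\rho'} + u_{\tau'}| \leq |u_{\rho'}| + |u_{\tau'}| \leq 2h$, so $|u_\rho| \leq 2h/k$.

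For the lower bound I would argue as follows. Since $\{u_\rho, u_{\rho'}\}$ is a $\ZZ$-basis, $u_\rho + u_{\rho'}$ is itself a primitive lattice vector, and its ray lies strictly between $\rho$ and $\rho'$. By the adjacency of $\rho$ and $\rho'$ in $\Sigma_h$, this ray cannot itself be in $\Sigma_h$, which forces $|u_\rho + u_{\rho'}| > h$, and symmetrically $|u_\rho + u_{\tau'}| > h$. Adding gives $u_\rho + u_{\rho'} + u_\rho + u_{\tau'} = (k+2) u_\rho$, but the triangle inequality runs the wrong way for extracting a lower bound on $|u_\rho|$ directly. Instead, I would use the symmetries of $\ZZ^2$ preserving the max norm (coordinate swaps and sign flips) to reduce to the case $u_\rho = (a, b)$ with $a \geq b \geq 0$, so $|u_\rho| = a$. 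Parameterize the ``companion'' vectors $v'$ satisfying $u_\rho \wedge v' = 1$ as $v_0 + m u_\rho$ for $m \in \ZZ$, with $v_0$ chosen so that its first coordinate lies in $[0, a)$. For $a > b$ the constraint $|v_0 + m u_\rho| \leq h$ reduces to a floor bound on $m$, and $u_{\rho'}$ corresponds to the extremal valid $m$ on one side; similarly for $u_{\tau'}$ on the other. Summing these two extremal values and equating with $k$ yields $k \geq 2h/a - 2$, equivalently $|u_\rho| = a \geq 2h/(k+2)$, which implies the stated lower bound (with any $\epsilon \to 0$ that one likes).

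The main obstacle will be the parameterization step: verifying that the max-norm constraint on the extremal companion is indeed binding on the first coordinate, which requires $a > b$ and careful separate treatment of the edge cases $b = 0$ (forcing $u_\rho = (1,0)$) and $a = b$ (forcing $u_\rho = (1,1)$). A minor nuisance is that for small $m$ the second coordinate can dominate, so one has to check that in the regime $h \to \infty$ the extremal $m$ is large enough for the first coordinate to take over. Once this geometric setup is nailed down, the remaining floor-function arithmetic is routine.
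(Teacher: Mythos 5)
Your upper bound is identical to the paper's (triangle inequality applied to $ku_\rho = u_{\rho'} + u_{\tau'}$), and both proofs share the same key parameterization of neighbors along a line parallel to $\rho$ (the paper's sequence $w_s = u_\gamma + s\, u_\rho$ is your $v_0 + m\, u_\rho$ up to shift). Where you genuinely diverge is in how that parameterization is exploited for the lower bound. The paper only extracts the weak inequality $|u_{\rho'}|, |u_{\tau'}| \geq h - |u_\rho|$, which is not enough by itself because the triangle inequality gives $|u_{\rho'} + u_{\tau'}| \leq |u_{\rho'}| + |u_{\tau'}|$ rather than $\geq$; the paper then patches this with an angle estimate $\bigl|u_{\rho'}/|u_{\rho'}| - u_{\tau'}/|u_{\tau'}|\bigr| \leq \epsilon$ (valid since neighbor angles shrink as $h \to \infty$) to get $|ku_\rho| \geq |u_{\rho'}| + |u_{\tau'}| - h\epsilon$. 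You instead read off the extremal indices directly: writing $u_{\rho'} = v_0 + m u_\rho$, $u_{\tau'} = -v_0 + m' u_\rho$ gives $m + m' = k$ immediately, and bounding $m$, $m'$ in max-norm coordinates yields $k > 2h/|u_\rho| - 2$. This is correct, and in fact your worried-about ``nuisance'' is not an issue: once $v_0 = (c,d)$ is normalized with $0 \leq c < a$, one checks that for every $m$ in $\{0, \dots, \lfloor(h-c)/a\rfloor\}$ \emph{both} coordinates of $v_0 + m u_\rho$ stay $\leq h$ (using $d = (1+bc)/a \leq b$ and $a > b$), while $m = \lfloor(h-c)/a\rfloor + 1$ forces the first coordinate past $h$; so the extremal $m$ is exactly that floor with no crossover analysis needed, and likewise for $m'$. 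Your route therefore actually produces the cleaner bound $|u_\rho| > \tfrac{2}{k+2}h$ with no $\epsilon$ at all. The trade-off is that your computation is tied to the max norm, whereas the paper's $\epsilon$-argument is norm-agnostic, which matters for the paper's stated remark that the results extend to arbitrary norms.
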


For a visualization of the geometric implications of this proposition see Figure~\ref{fig:bands}.

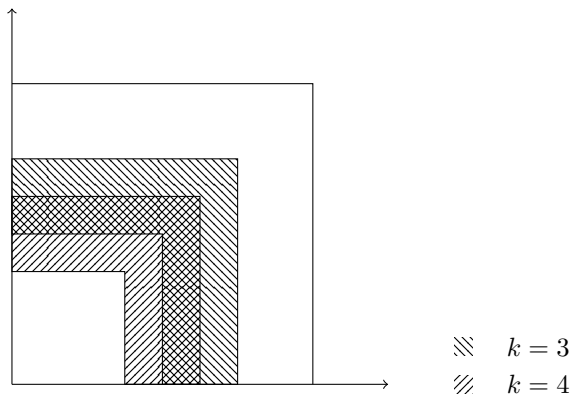
\begin{figure}
  \begin{center}
  \begin{tikzpicture}
    \draw (0,4)--(4,4)--(4,0);
    \draw[pattern=north west lines] (3,3)--(0,3)--(0,2)--(2,2)--(2,0)--(3,0)--cycle;
    \draw[pattern=north east lines] (2.5,2.5)--(0,2.5)--(0,1.5)--(1.5,1.5)--(1.5,0)--(2.5,0)--cycle;
    
    \draw[->] (0,0) -- (0,5);
    \draw[->] (0,0) -- (5,0);

    \node[rectangle,pattern=north west lines] at (6,0.5) {};
    \node at (7,0.5) {$k=3$};
    \node[rectangle,pattern=north east lines] at (6,0) {};
    \node at (7,0) {$k=4$};
    
  \end{tikzpicture}
  \end{center}
  \caption{For each self-intersection number, we expect that all rays with that self-intersection number lie in a band between $\frac{2-\epsilon}{k+2}h$ and $\frac{2}{k}h$. Although the bands overlap, the regions corresponding to higher self-intersection numbers are towards the interior.}
  \label{fig:bands}
\end{figure}

\begin{proof}
  Let us consider the behavior of the singularity index of a ray as we increase $h$. If we have a ray $\rho$, with neighboring rays $\tau $ and $\omega$, then by Theorem~10.4.4 in \cite{torvar} the singularity index is the unique integer $k$ such that $ku_{\rho}=u_{\tau}+u_{\omega}$. Rearranging that gives $\left|u_{\tau}+u_{\omega}\right|/k=|u_{\rho}|$. Then we apply the triangle inequality to give
  \[\frac{2h}{k}\geq \left|u_{\rho}\right|.\]

  Now we want a lower bound on $|u_{\rho}|$. For fixed $h$, the angle between neighboring rays in $\Sigma_h$ is bounded from above. Furthermore, this bound goes to zero as $h$ increases. Consequently there exists $\epsilon>0$ with $\epsilon \rightarrow 0$ as $h\rightarrow \infty$ such that
  \[\left|\frac{u_{\tau}}{|u_{\tau}|}-\frac{u_{\omega}}{|u_{\omega}|}\right|\leq \epsilon.\]
  
  Now we proceed by rewriting the left hand side and applying the triangle inequality
  \begin{align*}
    \left|\frac{|u_{\omega}|}{|u_{\tau}|}u_{\tau}-u_{\omega}\right| &= \left|\frac{|u_{\omega}|}{|u_{\tau}|}u_{\tau}+u_{\tau}-(u_{\omega}+u_{\tau})\right|\\
    &\geq \left|\left|\frac{|u_{\omega}|}{|u_{\tau}|}u_{\tau}+u_{\tau}\right|-\left|ku_{\rho}\right|\right|\\
    &\geq |u_{\tau}|+|u_{\omega}|-|ku_{\rho}|.
  \end{align*}
  Solving for $\left|ku_{\rho}\right|$ and then simplifying gives
  \begin{align}
    |ku_{\rho}|&\geq |u_{\tau}|+|u_{\omega}|-\left|\frac{|u_{\omega}|}{|u_{\tau}|}u_{\tau}-u_{\omega}\right|\nonumber \\
    &= |u_{\tau}|+|u_{\omega}|-|u_\omega|\cdot \left|\frac{u_{\tau}}{|u_{\tau}|}-\frac{u_{\omega}}{|u_{\omega}|}\right|\nonumber \\
    &\geq |u_{\tau}|+|u_{\omega}|-h\epsilon. \label{eq:1}
  \end{align}

  Now if we can find good lower bounds on $|u_{\tau}|$ and $|u_{\omega}|$ they will give a lower bound on $\left|u_{\rho}\right|$. Then since the situation for the left and right neighbors is symmetric it suffices to consider $\left|u_{\tau}\right|$.
  Starting with the case where $h=\left|u_{\rho}\right|$. Then in $\Sigma_h$ there is some $\gamma$ a neighbor of $\rho$. Now define a sequence of integer vectors $w_s$ by $w_s=u_{\gamma}+s\cdot u_{\rho}$.

\begin{claim*}
  Given any $h$, $\nu$ is a neighbor on the left or right of $\rho$ in $\Sigma_h$ if and only if $u_{\nu}=w_a$ for some $a$ with $w_{s}\notin \Sigma_h(1)$ for $s>a$.
\end{claim*}

\begin{proof}

  Consider the cone formed by $u_\rho$ and $w_0$. Since any neighbor must be at least as close to $\rho$ as $\gamma$, given $\rho\in\Sigma_h$, the neighbor of $\rho$ in $\Sigma_h$ must be in this cone. Thus the neighbor of $\rho$ is of the form $au_{\rho}+bw_0$ for some $a,b\in \NN$. Then since in every complete fan $\Sigma_h$, every cone is smooth, the cone formed by $au_{\rho}+bw_0$ and $\rho$ must be smooth. Thus we have that $\left|u_{\rho} \wedge (au_{\rho}+bw_0)\right|=1$. We can then simplify the left hand side.
  \begin{align*}
    \left|u_{\rho}\wedge(au_{\rho}+bw_0)\right|&=\left|u_{\rho}\wedge bw_0\right|\\
    &=b.
  \end{align*}
  This implies $b=1$. Furthermore, note that if $s>a$, then $su_{\rho}+bw_0$ is closer to $\rho$. However $au_{\rho}+w_0$ is the neighbor of $\rho$, thus $su_{\rho}+bw_0$ must not be in the fan $\Sigma_h$ for $s>a$.

\end{proof}

  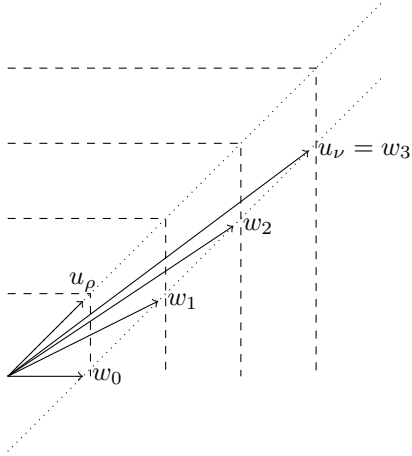
\begin{figure}
    \begin{center}
    \begin{tikzpicture}
      \coordinate (u) at (2,1);
      \coordinate (v) at (1,1);
      
      \draw[->] (0,0) -- (u) node[right] {$w_1$};
      \draw[->] (0,0) -- (v) node[above] {$u_{\rho}$};
      \draw[->] (0,0) -- ( $ (u)-(v) $ ) node[right] {$w_0$};
      \draw[->] (0,0) -- ( $ (u)+(v) $ ) node[right] {$w_2$};
      \draw[->] (0,0) -- ( $ (u)+(v)+(v) $ ) node[right] {$u_{\nu}=w_3$};

      \draw[dotted]  ($(u)-(v)-(v)$) -- ( $ (u)+(v)+(v)+(v) $ );
      \draw[dotted]  (0,0) -- ( $ (v)+(v)+(v)+(v)+(v) $ );

      \draw[dashed] (0,1.1) -- (1.1,1.1) -- (1.1,0);
      \draw[dashed] (0,2.1) -- (2.1,2.1) -- (2.1,0);
      \draw[dashed] (0,3.1) -- (3.1,3.1) -- (3.1,0);
      \draw[dashed] (0,4.1) -- (4.1,4.1) -- (4.1,0);
    \end{tikzpicture}
    \end{center}
        \caption{
          As a consequence of the claim, the nearest neighbors to $u_{\rho}$ at every possible value for $h$ lie along a line parallel to $u_{\rho}$.
          \label{fig:neighbor}
        }
  \end{figure}

  Now returning to the main proof, it remains to apply the claim to give a bound on $|u_{\tau}|$. To start let $u_{\tau}=w_a$, then 
  \begin{align*}
    h&\leq \left|w_{a+1}\right|\\
    &\leq\left|w_{a}+u_{\rho}\right|\\
    &\leq \left|u_{\tau}\right|+\left|u_{\rho}\right|.
  \end{align*}
  This can then be rearranged to give $|u_{\tau}|\geq h-|u_{\rho}|$. Repeating the process with $\omega$ yields $|u_{\omega}|\geq h-|u_{\rho}|$. This can then be substituted into Equation~\ref{eq:1}

  \begin{align*}
    |ku_{\rho}|&\geq |u_{\tau}|+|u_{\omega}|-h\epsilon\\
    |ku_{\rho}|&\geq 2\left(h-|u_{\rho}|\right)-h\epsilon\\
    |ku_{\rho}|&\geq 2h-2|u_{\rho}| - h\epsilon\\
    (k+2)\left|u_\rho\right|&\geq h(2-\epsilon)\\
    |u_{\rho}|&\geq \frac{2-\epsilon}{k+2}h.
  \end{align*}

  So given $\rho$ with singularity index $k$, the height of $\rho$, as a proportion of $h$, satisfies the inequality \[\frac{2-\epsilon}{k+2} < \frac{|u_{\rho}|}{h} < \frac{2}{k}.\]

\end{proof}

\begin{remark}
If we ignore the $\epsilon$, which goes to zero in the limits we care about, Proposition~\ref{bounds} tells us that for $\left|u_{\rho}\right|$ between $\frac{2}{k+1}h$ and $\frac{2}{k}h$, the only possible singularity indices are $k$ and $k+1$. See Figure~\ref{fig:space} in Section~\ref{sec:dist} to see how this affects the distribution of those minimal ray generators corresponding the singular rays.
\end{remark}

\section{Thresholds}
\label{sec:threshold}

\subsection{Main Result}

  Now we begin to piece the previous propositions together into a proof of our main result. To start though, we need a minor lemma.
  
\begin{lem}
  \label{limit}
  Let $q(n)$ be a function taking values in $\left[0,1\right]$ Then 
  \[\lim_{n\rightarrow \infty } \left(1-q(n)\right)^n=\exp\left(\lim_{n\rightarrow \infty}-nq(n)\right).\]
\end{lem}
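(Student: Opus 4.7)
The plan is to pass to logarithms and reduce the claim to showing that
\[
\lim_{n\to\infty} n\log\bigl(1-q(n)\bigr) \;=\; \lim_{n\to\infty} \bigl(-n\,q(n)\bigr),
\]
after which exponentiating (and using continuity of $\exp$, extended so that $\exp(-\infty)=0$) yields the lemma. The whole argument is driven by the elementary two-sided Taylor bound
\[
-x - x^2 \;\leq\; \log(1-x) \;\leq\; -x
\qquad\text{for } x \in [0,\tfrac{1}{2}],
\]
with the right-hand inequality holding for all $x\in[0,1)$.

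I would split into two cases based on the behavior of $L := \lim_{n\to\infty} n\,q(n) \in [0,\infty]$. In the finite case, $q(n) \to 0$, so eventually $q(n)\le 1/2$ and the two-sided bound applies. Multiplying by $n$ gives
\[
-n\,q(n) - n\,q(n)^2 \;\leq\; n\log\bigl(1-q(n)\bigr) \;\leq\; -n\,q(n),
\]
and the error term factors as $n\,q(n)^2 = \bigl(n\,q(n)\bigr)\cdot q(n) \to L\cdot 0 = 0$, so the squeeze theorem delivers the desired convergence. In the case $L = \infty$, the one-sided bound $n\log(1-q(n)) \le -n\,q(n)$ forces $n\log(1-q(n))\to -\infty$, hence $(1-q(n))^n \to 0 = \exp(-\infty)$. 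The degenerate case where $q(n)=1$ for some $n$ is handled directly: both sides are zero.

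There is no real obstacle here; the statement is essentially the familiar identity $\lim (1 - c/n)^n = e^{-c}$ upgraded to a variable rate. The only point requiring a moment's care is verifying that the quadratic Taylor remainder $n\,q(n)^2$ vanishes in the finite-$L$ case, which is immediate once one writes it as $(n\,q(n))\cdot q(n)$ and uses that the first factor is bounded while the second tends to zero.
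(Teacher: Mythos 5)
Your proof is correct and takes essentially the same approach as the paper: both pass to logarithms, exploit the Taylor expansion of $\log(1-x)$ near zero (the paper via the full power series, you via a two-sided quadratic bound and the squeeze theorem), and both hinge on the observation that the quadratic error $n\,q(n)^2 = \bigl(n\,q(n)\bigr)\cdot q(n)$ vanishes when $\lim n\,q(n)$ is finite. If anything, your explicit case split on $\lim n\,q(n)$ being finite or infinite and the clean factoring of the remainder is slightly tighter than the paper's version, which splits on whether $q(n)\to 0$ and leaves the disposal of the quadratic remainder somewhat implicit.
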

Note in particular that this is simply a generalization of the familiar fact that $\lim_{x\rightarrow \infty} \left(1+\frac{1}{x}\right)^x=e$ by taking $q(n)=-1/n$.

\begin{proof}

  We start by taking logarithms and simplifying.
  \begin{align*}
    \lim_{n\rightarrow \infty}\log\left(1-q(n)\right)^n&=\lim_{n\rightarrow \infty}n\log\left(1-q(n)\right)\\
    &=\lim_{n\rightarrow \infty}n\left(-\sum_{k=1}^{\infty}\frac{q(n)^k}{k}\right)\\
    &=\lim_{n\rightarrow \infty}-nq(n)\left(\sum_{k=1}^\infty \frac{q(n)^{k-1}}{k}\right)\\
    &=\lim_{n\rightarrow \infty}-nq(n)\left(1 + q(n)\sum_{k=2}^\infty \frac{q(n)^{k-2}}{k}\right)\\
    &=\lim_{n\rightarrow \infty}-nq(n)+\lim_{n\rightarrow\infty}n\cdot (q(n))^2\sum_{k=2}^\infty \frac{q(n)^{k-2}}{k}\\
    &=\lim_{n\rightarrow \infty}-nq(n).
  \end{align*}
  
  In the case where $\lim_{n\rightarrow \infty}q(n)=0$ then the last step proceeds by the convergence of the sun. Otherwise, if $\lim_{n\rightarrow \infty}q(n)\neq 0$, we will have $\lim_{n\rightarrow \infty} nq(n) = \infty$ and $\lim_{n\rightarrow \infty} \log(1-q(n))^n =\infty$. Thus regardless of which case occurs \[\lim_{n\rightarrow \infty}\log\left(1-q(n)\right)^n=\lim_{n\rightarrow \infty}-nq(n).\]

  Finally since $\exp$ is continuous, we can apply $\exp$ to both sides.

  \begin{align*}
    \exp\left(\lim_{n\rightarrow \infty}\log\left(1-q(n)\right)^n\right)&=\exp\left(\lim_{n\rightarrow \infty}-nq(n)\right)\\
    \lim_{n\rightarrow \infty}\left(1-q(n)\right)^n&=\exp\left(\lim_{n\rightarrow \infty}-nq(n)\right).
  \end{align*}

\end{proof}

We can now proceed with the proof of the main theorem.

\begin{proof}[Proof of Theorem~\ref{mainresult}]

  For statement 1, let us start with the $q\prec 1/h^2$ case. Recall that the complete fan $\Sigma_h$ is smooth due to Lemma~\ref{completesmooth}. So it suffices to show that for $q\prec \frac{1}{h^2}$ the resulting fan is $\Sigma_h$ with high probability. Expanding the definition of $\prec$  gives  $h^2q\rightarrow 0$. Let $n$ be the number of rays with magnitude at most $h$. Then Lemma~\ref{raycount} along with the assumption $q\prec 1/h^2$ implies that $nq\rightarrow 0$. Then the probability of getting $\Sigma_h$ is given by \[P(\Sigma=\Sigma_h)=(1-q)^n.\] So we should compute $\lim_{h\rightarrow \infty} (1-q)^n$. By Lemma~\ref{limit}, we know that \[\lim_{h\rightarrow \infty} (1-q)^n=\exp(\lim_{h\rightarrow \infty} -nq)=\exp(0)=1.\] Thus in this case the fan is $\Sigma_h$ with high probability. Then for the $1-q\prec 1/h^2$ case by the same computation the fan is the empty fan with high probability. And the empty fan corresponds to $(\CC^*)^2$, which is smooth.

  For statement 2, recall that there are two cases, a dense case and a sparse case.
  
  \noindent \emph{Dense case}: $q\succ 1/h^2$ and $1-q\succ 1/h$
  
  By Corollary~\ref{singdensity} the rays which blow down to a singularity of index $k$ occur with positive density. So let this density be $\delta$, then there exists at least $m=\frac{\delta}{4}h^2$ of them such that none of them share any neighboring rays in $\Sigma_h$. Then for each ray, we can consider the cone formed by taking both of its neighbors.

  Each of these cones occurs in a fan in $T(h,1-q)$ with probability given by $(1-q)^2q$. Since none of these cones share any rays, we can consider them independently. Thus the probability that at least one of these cones is in our fan is given by $1-(1-(1-q)^2q)^{m}$. If at least one of these cones is present, then the fan has a singularity of size at least $k$, so it suffices to consider the following limit
  \[\lim_{h\rightarrow \infty}1-\left(1-\left(1-q\right)^2q\right)^{m}.\]
  We can simplify using Lemma~\ref{limit}.
  \begin{equation}
    \lim_{h\rightarrow \infty}1-\left(1-\left(1-q\right)^2q\right)^{m}=1-\exp\left(\lim_{h\rightarrow \infty}\left(1-q\right)^2qm\right).\label{eq:2}
  \end{equation}
  
  There are then two cases to consider.

  \noindent\emph{ Case $\lim_{h\rightarrow \infty}q\neq 1$}:

  In this case, since we have that $q\succ 1/h^2$, we also have
  \begin{align*}
    \lim_{h\rightarrow \infty} (1-q)^2qm&=\lim_{h\rightarrow \infty} (1-q)^2 \lim_{h\rightarrow \infty} qm\\
    &= \infty.
  \end{align*}

  \noindent\emph{ Case $\lim_{h\rightarrow \infty}q\neq 0$}:
  
  In this case we have $1-q\succ 1/h$ so
    \begin{align*}
      \lim_{h\rightarrow \infty} (1-q)^2qm&=\lim_{h\rightarrow \infty} (1-q)^2m \lim_{h\rightarrow \infty} q\\
      &= \infty.
    \end{align*}
    Now substituting into Equation~\ref{eq:2} gives \[\lim_{h\rightarrow \infty}1-(1-(1-q)^2q)^{m}=1.\] Thus with high probability the fan contains at least one singularity of index at least $k$.

    \noindent \emph{Sparse Case}: $1/h^2\prec (1-q)$ and $q\rightarrow 1$
    
    Given any ray $\rho$ there exists a set of rays in $\Sigma_h(1)$ with which, $\rho$ can form a smooth cone. Let $\tau$ be any ray such that the cone formed by $\rho$ and $\tau$ is smooth. Then given coordinates $(x,y)=u_{\rho}$ and $(a,b)=u_{\tau}$ the smoothness of the cone implies $\left|xb-ya\right|=1$. This implies that the lattice points corresponding to the rays with this property lie along two lines parallel to $\rho$, one on either side. Since the height of our rays is bounded above by $h$, there are at most $\frac{2h}{\left|\rho\right|}$ possible rays on either side.

    Now consider the set of rays for which there are at most 4 rays on either side with which we can make a smooth cone. It suffices to ask for $\frac{2h}{\left|\rho\right|}\leq 4$ which becomes $|\rho|\geq \frac{h}{2}$. Let this set of rays be $S$. For reasons of independence, we will want to restrict to the upper half plane, so let $S_{+}\subset S$ be the subset of those which are in the upper half plane. Finally we give $S_+$ an ordering starting the positive $x$-axis and proceeding counter clockwise.

    Let $\rho\in S_{+}$ be the $k$-th ray in $S_+$. Define $F_{\rho}$ to be the set of fans where $\rho$ is the first ray from $S_{+}$ and the cone containing $\rho$ as its rightmost ray is singular. Then since the first ray is unique, these are disjoint sets for distinct $\rho$. Thus we have

    \[P(\Sigma \in F_{\rho})\leq q^{k-1}(1-q)q^{4}\left(1-q^{n-4}\right).\]

    Here the $q^{k-1}(1-q)$ gives the probability that $\rho$ is the first ray. The $q^{4}$ gives the probability that none of the rays with which it could make a smooth fan exists. The $1-q^{n-4}$ gives the probability that at least one ray exists on the half plane after $\rho$ so that there is actually a cone that involves $\rho$. Now we sum each of these disjoint events.

    \[P(\Sigma\text{ is singular}) \geq \sum_{\rho\in S_+}P(\Sigma\in F_{\rho}).\]
    Now substituting the bound on $P(\Sigma\in F_{\rho})$ and observing that the resulting sum is a geometric series yields the following.
    \begin{align*}
      P(\Sigma\text{ is singular}) &\geq \sum_{k=1}^{\left|S_+\right|}q^{k-1}(1-q)q^4\left(1-q^{n-4}\right)\\
      &=\frac{1-q^{\left|S_+\right|}}{1-q} (1-q)q^4\left(1-q^{n-4}\right)\\
      &=\left(1-q^{\left|S_+\right|}\right)q^4\left(1-q^{n-4}\right).
    \end{align*}
    
    If $q\rightarrow 1$ then $q^4 \rightarrow 1$, and if $1/h^2\prec 1-q$ then $1-q^{n-4}\rightarrow 1$ and $1-q^{|S_+|}\rightarrow 1$. Thus for $q\rightarrow 1$ and $1/h^2\prec 1-q$, with high probability, $\Sigma$ is singular.

\end{proof}

As noted, the dense case and the sparse case proofs are entirely different, and reflect different ways in which the singularities arise. In the dense case, enough singularities arise from blowdowns along single rays that we can easily get the desired result from simply considering those singularities. This local computation contrasts with our proof in the sparse case. Here we look globally instead. For a fixed ray, there are only a fixed number of rays of height at most $h$ where the pair of rays form a smooth cone. Thus in many cases, we will fail to have any of these rays, and thus we will be forced to have a singular cone.

\subsection{Density}
\label{sec:density}

Let us define the density of singularities as the number of singular points divided by the number of fixed points. In particular given a fan $\Sigma$, let \[\delta_k\left(\Sigma\right)=\frac{\#\text{Points with singularity index } \geq k}{\# \text{Fixed Points}}.\] With this we can proceed to prove our density result.

\begin{proof}[Proof of Theorem~\ref{density}]
  Let $R_{\geq k}(\Sigma)$ be the number of singular points of index at least $k$ in $\Sigma$. We repeat the trick of considering certain cones as in the dense case of the proof of Theorem~\ref{mainresult}. As before, we can select $m:=\frac{\delta}{4}h^2$ of them, where $\delta$ is the density of rays with self-intersection number $k$ in $\Sigma_h$. Then $R_{\geq k}(\Sigma)$ is at least the number of such cones that exist.

  For $\Sigma\in T(h,1-q)$, the probability of $\Sigma$ having any particular cone of this form is given by $r:=(1-q)^2q$.  And since these cones are chosen to be independent, the number of such cones that exist is given by the binomial distribution, $\Bin(m,r)$. Thus we have

  \[P(R_{k}(\Sigma) \geq c\cdot h^2) \geq P(\Bin(m,r) \geq ch^2).\]

  Then we reverse the inequalities to give

  \[P(R_{k}(\Sigma) < c\cdot h^2) \leq P(\Bin(m,r) < ch^2).\]

  Now to bound the right hand side, we use a tail bound on the binomial distribution derived from Theorem 1 of \cite{hoeffding}. Viewing $\Bin(m,r)$ as the sum of $m$ identically distributed independent random variables each taking the value $1$ with probability $r$ and the value $0$ with probability $1-r$ gives the following inequality so long as $c\cdot h^2 \leq m\cdot r$.

  \[ P(\Bin(m,r) < c\cdot h^2)\leq e^{-2(mr-ch^2)^2/m}.\]
  
  Combining with the previous inequality gives
  
  \[P(R_{k}(\Sigma) < c\cdot h^2) \leq e^{-2(mr-ch^2)^2/m}.\]
  
  Now it suffices to show that the right hand side goes to zero. Since $m\sim h^2$ the previous formula can be rearranged to cancel the $m$ in the denominator yielding
  \[P(R_k(\Sigma)<c\cdot h^2)\leq e^{-2m(r-c)^2}.\]
  Since $r-c$ is constant and $m$ increases with $h$ the limit of the right hand side is zero. Thus
  \[\lim_{h\rightarrow \infty}P(R_k(\Sigma)<c\cdot h^2)=0.\]
  
  This gives a bound on the numerator of the density formula. To give a bound on the denominator for that formula notice that for sufficiently large $h$, given a fan $\Sigma$ the number of fixed points in  $\Sigma$ is less than the number of rays in $\Sigma$, and we know $\left|\Sigma(1)\right|\leq N_h\sim h^2$. Thus for sufficiently small $c$

  \[\lim_{h\rightarrow \infty}P(\delta_k(\Sigma)<c)=0.\]
  \end{proof}

A quick heuristic argument shows that so long as $p\rightarrow 1$, on average the density of cones in $\Sigma$ that are not in $\Sigma_h$ goes to zero, and since any singular points must come from cones not in the complete fan, from this we expect that with high probability the density will eventually be smaller than any positive number. Note also that this is distinct from asking that with high probability $\delta_k(\Sigma)=0$. Here perhaps by looking at the expected value of $\delta_k(\Sigma)$ we can find something more enlightening.

\section{Further Directions}
\label{sec:further}
\subsection{Direct Generalizations}

A first hope might be to generalize this to higher dimensions. But as stated before, our notion of a random fan does not directly extend to the case of higher dimensions. There are some potential candidates, but none seems particularly natural. The most straightforward way would simply be to still take random rays, but then use a ``minimal triangulation'' of the rays (viewed as points on the sphere) to choose a ``minimal simplicial completion'', but this process is now quite complex and hard to understand.

One might also try to consider the behavior along the threshold, namely the case where $q$ is proportional to $1/h^2$ or where $1-q$ is proportional to $1/h^2$. Numeric data suggests that in this case, the behavior is dependent only on the limit of $q\cdot h^2$ or $(1-q)h^2$.

\subsection{Complete Fans and the distribution of rays}
Instead of asking about random fans we can also ask about the properties of this complete fan that shows up in the smooth case. In particular, one might reasonably ask whether the limit of these fans has a reasonable interpretation as a geometric object. As a first step, we could try to compute what the density of rays in $S_{\geq k}$ is in the limit. This geometrically is counting the number of exceptional divisors with appropriate negative self-intersection number. The bounds in Proposition~\ref{bounds} suffice for the purposes of our result, but the numerical results presented in Figure~\ref{fig:limits} allow us to conjecture the following refinement.

\begin{conj}
  \label{exactbounds}
  For $k>1$
  \[\lim_{h\rightarrow \infty} \frac{\left|S_{\geq k}\right|}{\left|\Sigma_h(1)\right|} = \frac{2}{T_{k}}.\] Where $T_n=\frac{n^2+n}{2}$ is the n-th triangular number.
\end{conj}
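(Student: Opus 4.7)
The plan is to derive a closed-form expression for the singularity index $k(\rho,h)$ produced by blowing down a single ray $\rho\in\Sigma_h(1)$, and then to read off the asymptotic density by a direct count.

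First I would observe that $\Sigma_{|u_\rho|}$ is smooth by Lemma~\ref{completesmooth}, so in the minimal fan containing $\rho$ the singularity index is $k_0=1$. Combining this with the claim proved inside Proposition~\ref{bounds}, which writes the neighbors of $\rho$ in $\Sigma_h$ as $u_{\gamma_R}+a_R u_\rho$ and $u_{\gamma_L}+a_L u_\rho$ with $a_R,a_L$ maximal, gives $k(\rho,h)=1+a_R+a_L$. By the $D_4$-symmetry of the $\max$-norm it suffices to work in the wedge $x_1>x_2>0$, where $u_\rho=(x_1,x_2)$ with $\gcd(x_1,x_2)=1$. There the constraint $|u_\tau|_\infty\le h$ is controlled by the first coordinate, and setting $g=(u_{\gamma_R})_1$ the formula reduces to
\[k(\rho,h)=\left\lfloor\tfrac{h-g}{x_1}\right\rfloor+\left\lfloor\tfrac{h+g}{x_1}\right\rfloor,\]
with $g\in\{1,\dots,x_1-1\}$ satisfying $\gcd(g,x_1)=1$ and $g\equiv x_2^{-1}\pmod{x_1}$.

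Writing $h=qx_1+r$ with $0\le r<x_1$, a short case analysis on the inequalities $g\lessgtr r$ and $g+r\lessgtr x_1$ shows that $k\in\{2q-1,2q,2q+1\}$, with the $2q\pm 1$ values concentrated in identifiable sub-intervals of residues $g$. Consequently, for a fixed threshold $k_*>1$ the rays with $k\ge k_*$ split into those with $\lfloor h/x_1\rfloor$ large (where every residue contributes) and a transition band where a prescribed fraction of residues contributes. Using the bijection $x_2\leftrightarrow g$ on $(\ZZ/x_1\ZZ)^*$, the classical estimate $\sum_{x_1\le N}\phi(x_1)\sim 3N^2/\pi^2$, and the equidistribution of coprime residues in sub-intervals of $[0,x_1]$ with density $\phi(x_1)/x_1$, I would integrate in $x_1$ to count contributing rays in the wedge. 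Elementary algebra makes the total come out to $\sim 6h^2/[\pi^2 T_{k_*}]$; dividing by the wedge total $\sim 3h^2/\pi^2$ and invoking the eightfold symmetry of $\Sigma_h(1)$ yields the claimed limit $2/T_{k_*}$.

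The main obstacle will be the uniformity of the equidistribution of coprime residues in the relevant sub-intervals: it must hold with error $o(h^2)$ after summing $x_1$ over intervals of length $\Theta(h)$ near the critical boundaries $x_1\approx h/m$ and $x_1\approx h/(m+1)$. Standard Möbius-inverted estimates should supply the required accuracy, but the bookkeeping, together with the parallel case split between even and odd $k_*$ (both of which must telescope to the same expression $2/T_{k_*}$), will absorb most of the work. Rays along the coordinate axes and the diagonals number only $O(1)$ and are absorbed into the error.
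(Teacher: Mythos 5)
The statement you are proving is labeled a \emph{Conjecture} in the paper, supported only by the numerical table in Figure~\ref{fig:limits}; the paper offers no proof of it, so there is no in-text argument to measure yours against. That said, your outline does look like a genuine route to a proof, and I spot-checked the central pieces. Writing $q=\lfloor h/x_1\rfloor$ and $r=h-qx_1$, the formula $k(\rho,h)=\lfloor(h-g)/x_1\rfloor+\lfloor(h+g)/x_1\rfloor$ does give $k\in\{2q-1,2q,2q+1\}$ according to the position of $g$ relative to $r$ and $x_1-r$, exactly as you say (for instance $\rho=(3,1)$, $h=7$, $g=1$ yields $k=4$, matching the direct computation with neighbors $(7,2)$ and $(5,2)$), and integrating against $\phi(x_1)\sim(6/\pi^2)x_1$ returns $2/T_k$ for $k=2,3,4$, so the general telescoping is believable.

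One step is stated too fast. You invoke Lemma~\ref{completesmooth} for ``the singularity index is $k_0=1$'' at $h=|u_\rho|$, but that lemma only gives smoothness of $\Sigma_{|u_\rho|}$ itself, not smoothness of its blowdown along $\rho$; what you actually need is that the two neighbors $\gamma_R,\gamma_L$ of $\rho$ in $\Sigma_{|u_\rho|}$ satisfy $u_{\gamma_R}+u_{\gamma_L}=u_\rho$. This is true away from the $O(1)$ axis and diagonal rays you already discard, and here is the missing argument: with $k\, u_\rho=u_{\gamma_R}+u_{\gamma_L}$, the $\max$-norm triangle inequality gives $k|u_\rho|\le|u_{\gamma_R}|+|u_{\gamma_L}|\le 2|u_\rho|$, so $k\le 2$; and $k=2$ would force $|u_{\gamma_R}|=|u_{\gamma_L}|=|u_\rho|$, but a neighbor $\gamma$ with the same $\max$-norm as a ray $\rho$ strictly inside the wedge shares $\rho$'s dominant coordinate, making $\left|u_\rho\wedge u_{\gamma}\right|$ a positive multiple of $|u_\rho|\ge 2$ and therefore not $1$, contradicting smoothness of $\Sigma_{|u_\rho|}$. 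Hence $k=1$, and $k(\rho,h)=1+a_R+a_L$ as you claim. The other gaps you flag --- uniform equidistribution of coprime residues in the transition bands, and the even/odd-$k$ case split --- are real but routine; the thing to verify is that the cumulative error after summing over $x_1\le h$ is $o(h^2)$, which the standard M\"{o}bius-inverted estimate should supply.
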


\begin{figure}
\begin{tabular}{c|c|c|c|c|c|c|}
  & $\geq 2$ & $\geq 3$ & $\geq 4$ & $\geq 5$ & $\geq 6$ & $\geq 7$\\\hline
  10  & 0.6875 & 0.3125 & 0.1875 & 0.125 & 0.125 & 0.0625\\\hline
  50 & 0.6641 & 0.3359 & 0.1990 & 0.1370, & 0.0930 & 0.0724 \\\hline
  100 & 0.6675  & 0.3325 & 0.1997 & 0.1327 & 0.0959 & 0.0716 \\\hline
  500 & 0.6666 & 0.3334 & 0.2000 & 0.1334 & 0.0952 & 0.0714 \\\hline
  1000 & 0.6667 & 0.3333 & 0.2000 & 0.1333 & 0.0953 & 0.0714\\\hline
  5000 & 0.6667 & 0.3333 & 0.2000 & 0.1333 & 0.0952 & 0.0714\\\hline 
  Conjectured Limits & 2/3 & 2/6 & 2/10 & 2/15 & 2/21 & 2/28\\\hline
\end{tabular}

\caption{By considering the value $\left|S_{\geq k}\right|/\left|\Sigma(1)\right|$, we can compare the actual percentage of rays which have a particular self intersection number with the expected percentage as given by Conjecture~\ref{exactbounds}.\label{fig:limits}}
\end{figure}

In particular, this would imply that not only is $S_{\geq k}$ of positive density in $\Sigma_h(1)$ but also $S_{k}$, with that we would be able to strengthen Theorem~\ref{mainresult}.

\begin{conj}
  \label{stronger}
  Let $k>1$ then for a fan $\Sigma$ chosen with respect to $T(h,1-q)$. If $q\succ 1/h^2$ and $1-q\succ 1/h$ then with high probability $X(\Sigma)$ has a singularity of index of exactly $k$.
  If instead $1-q\succ 1/h^2$, then with high probability $X(\Sigma)$ has a singularity of index of at least $k$.
\end{conj}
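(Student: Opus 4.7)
The plan is to handle the dense and sparse cases separately, mirroring the proof of Theorem~\ref{mainresult} but pinning down the singularity index. The dense case ($q\succ 1/h^2$, $1-q\succ 1/h$) strengthens ``index at least $k$'' to ``exactly $k$'', while the sparse case ($1-q\succ 1/h^2$) extends ``singular'' to ``index at least $k$'' in the genuinely sparse regime $1-q\prec 1/h$ not covered by the main theorem's strengthening.

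For the dense case, I would replicate the independence argument from the dense portion of Theorem~\ref{mainresult} verbatim, but select the disjoint triples $(\rho,\tau,\omega)$ from $S_k$ rather than from $S_{\geq k}$. Granted a positive limiting density $\delta_k$ for $\left|S_k\right|/\left|\Sigma_h(1)\right|$, one picks $m\sim (\delta_k/4)\cdot h^2$ such triples with pairwise disjoint ray sets, considers the event ``$\tau,\omega\in\Sigma$, $\rho\notin\Sigma$'' of probability $(1-q)^2 q$, and applies Lemma~\ref{limit} exactly as before to conclude at least one such event occurs with probability tending to $1$. When any such event does occur, the relation $ku_{\rho}=u_{\tau}+u_{\omega}$ forces the resulting cone spanned by $\tau$ and $\omega$ to have singularity index exactly $k$. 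The needed input $\delta_k>0$ is essentially the assertion of Conjecture~\ref{exactbounds} (with the explicit value $2/T_k - 2/T_{k+1}$).

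For the sparse case, I would redo the ray-counting in the sparse portion of Theorem~\ref{mainresult}, now counting partners of every index below $k$ rather than only index $1$. Writing $u_\rho=(x,y)$ with $\gcd(x,y)=1$ and $u_\tau=(a,b)$, the condition $|u_\rho\wedge u_\tau|=j$ becomes $xb-ya=\pm j$; each of these two lines carries a lattice progression of step size $|u_\rho|$, so at most $O(h/|u_\rho|)$ such partners exist per $j$, giving a total of at most $O(kh/|u_\rho|)$ rays forming an index-$<k$ cone with $\rho$. Choose a constant $C_k$ so that for $|u_\rho|\geq C_k h$ this total is bounded by a constant $M_k$; the set $S$ of such rays (and its upper half $S_+$) still has positive density in $\Sigma_h(1)$. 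Redefining the events $F_\rho$ from the main proof to additionally require that none of these $M_k$ low-index partners are present, the same disjoint-union bound yields
\[P(\Sigma\text{ has a cone of index}\geq k)\geq (1-q^{|S_+|})\,q^{M_k}\,(1-q^{n-M_k}),\]
which tends to $1$ under $q\to 1$ and $1-q\succ 1/h^2$.

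The main obstacle is the dense case: one needs positive density of $S_k$ (not merely $S_{\geq k}$), and Proposition~\ref{bounds} alone is not sharp enough. The bands for $S_{\geq k}$ and $S_{\geq k+1}$ overlap, and the lower bound on the density of $S_{\geq k}$ extracted from Corollary~\ref{singdensity} is smaller than the natural upper bound on the density of $S_{\geq k+1}$, so one cannot simply subtract. Proving Conjecture~\ref{exactbounds}, or even just showing that in the band $|u_\rho|/h\in (2/(k+1),2/k]$ a positive fraction of rays lie in $S_k$, is the genuinely new ingredient required; the sparse case, by contrast, is a routine upgrade of the existing argument once the above counting is in hand.
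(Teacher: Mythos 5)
This statement is labelled a \emph{conjecture} in the paper, and the paper gives no proof of it; it only offers two remarks immediately below the statement: for the first (dense) assertion, that ``it suffices to show that $S_k$ is of positive density,'' after which ``the proof would proceed exactly as the proof of the dense case of statement 2 of Theorem~\ref{mainresult}''; and for the second (sparse) assertion, that ``a different technique to count singularities of a particular index is required'' and that the author has no obvious way to do this. Your proposal's dense-case outline matches the paper's remark precisely, including correctly isolating the single missing ingredient --- positive density of $S_k$ rather than merely $S_{\geq k}$, which would follow from Conjecture~\ref{exactbounds} with $\delta_k = 2/T_k - 2/T_{k+1}$ --- and correctly flagging that Proposition~\ref{bounds} alone cannot deliver it because the bands overlap. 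Since you accurately describe this as the ``genuinely new ingredient required'' rather than claiming to supply it, you have not overclaimed.

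Your sparse-case sketch goes beyond what the paper offers: the paper merely says a different technique is needed, whereas you propose a concrete one --- for each $\rho$ with $|u_\rho| \geq C_k h$, count the rays $\tau$ with $1 \leq |u_\rho \wedge u_\tau| < k$ by noting that for each fixed $j$ these sit on a lattice progression of step $u_\rho$ along the line $xb - ya = j$, so at most $O(kh/|u_\rho|) = M_k$ of them fit in the height-$h$ box; then re-run the disjoint-union bound over $F_\rho$ requiring all $M_k$ such partners absent. I do not see a gap in this: if all low-index partners of $\rho$ on the relevant side are absent and at least one ray on that side is present, the nearest such ray necessarily has wedge $\geq k$ with $\rho$; the events $F_\rho$ indexed by ``first ray in $S_+$'' are disjoint as before; and the bound $(1 - q^{|S_+|})\,q^{M_k}\,(1 - q^{\lfloor n/2\rfloor - M_k}) \to 1$ uses only $q \to 1$ and $1-q \succ 1/h^2$. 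One small point worth making explicit when writing this up: the low-index partners of $\rho$ need not be angularly close to $\rho$, so absence of all of them is a strictly stronger condition than the nearest present ray having large wedge --- but strictly stronger is exactly what a lower bound needs, so the argument is valid. In summary, for the dense statement you reproduce the paper's own assessment of what is missing, and for the sparse statement you propose an argument the paper does not contain, which looks sound to me and, if carried out carefully, would resolve that half of the conjecture.
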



For the first statement in this conjecture, it suffices to show that $S_k$ is of positive density. Notice that all we have is that $S_{\geq k}$ has positive density. Given this, the proof of the conjecture would proceed exactly as the proof of the dense case of statement 2 of Theorem~\ref{mainresult}.

For the second statement in this conjecture, a different technique to count singularities of a particular index is required, since we currently have no obvious way of doing this for cones that don't come from simply removing a single ray from $\Sigma_h$.

Also, there's no reason to believe that we can extend the first statement to cover the second case, since we expect that with so few cones in the sparse case, we would at times fail to have any singularities of any particular index. In particular, the number of fixed points for $1-q\prec 1/h$ grows slower than linearly, and we expect that the number of possible singularity indexes to grow linearly, so in general a fan would fail to achieve most of the possible singularity indexes.

\subsection{The Distribution of Rays}
\label{sec:dist}

Similarly, we can also ask about the distribution of these rays instead of their density. One way is to consider the distribution within $\ZZ^2$ with rays represented by their minimal lattice generators. This yields concentric shells of points with decreasing singularity index, as illustrated in Figure~\ref{fig:space} for the case of $h=5$. In particular, these shells can be a way to understand the geometry of Proposition~\ref{bounds} and Conjecture~\ref{exactbounds}.

\begin{figure}
  \begin{center}
  \begin{tikzpicture}
    \definecolor{color10}{rgb}{1,0,0}
    \definecolor{color9}{rgb}{0.8,0,0.1}
    \definecolor{color5}{rgb}{0.4,0,0.4}
    \definecolor{color4}{rgb}{0.3,0,0.5}
    \definecolor{color3}{rgb}{0.2,0,0.6}
    \definecolor{color2}{rgb}{0.1,0,0.7}
    \definecolor{color1}{rgb}{0,0,1}
    \begin{axis}[legend style={at={(1.2,1)},anchor=north}]
      \addplot+[only marks,mark=o] coordinates {
        (1, 5)
        (2, 5)
        (3, 5)
        (4, 5)
        (5, 4)
        (5, 3)
        (5, 2)
        (5, 1)
      };
      \addlegendentry{k=1}
      \addplot+[only marks,mark=square*] coordinates {
        (1, 4)
        (3, 4)
        (4, 3)
        (4, 1)
      };
      \addlegendentry{k=2}

      \addplot+[only marks,mark=*] coordinates {
        (1, 3)
        (2, 3)
        (3, 2)
        (3, 1)
      };
      \addlegendentry{k=3}
      \addplot+[only marks,mark=star] coordinates {
        (1, 2)
        (2, 1)
      };
      \addlegendentry{k=5}
      \addplot+[only marks,mark=diamond*] coordinates {
        (1, 1)
      };
      \addlegendentry{k=9}
      \addplot+[only marks,mark=square] coordinates {
        (1, 0)
        (0, 1)
      };
      \addlegendentry{k=10}

    \end{axis}
    
  \end{tikzpicture}
  \end{center}
  \caption{
    By plotting the values of the self intersection numbers in the case of $h=5$, we can see the pattern suggested by Proposition~\ref{bounds}. In particular, the rays with low self-intersection are concentrated near the boundary.
    \label{fig:space}
    }
\end{figure}
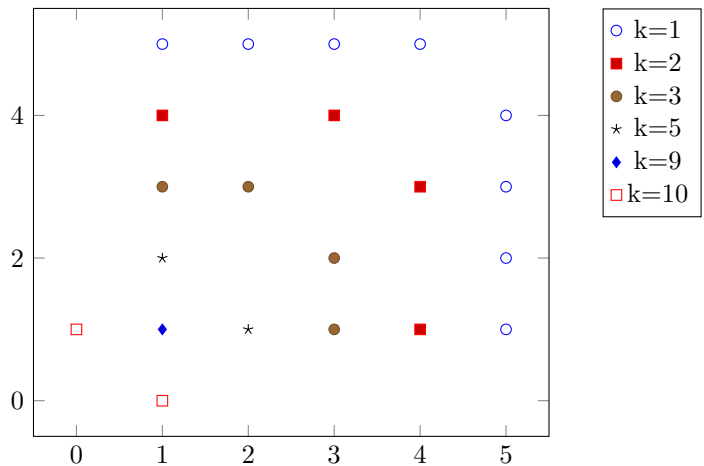

\section{Acknowledgments}

Thanks first to my adviser Daniel Erman for helping me edit this paper. Also to my friends for helping me find many of the typos in the paper. Thanks to the anonymous reviewer for the helpful comments in revising this paper. Thanks to the developers of Macaulay2\cite{M2} which I used initially to generate examples and test conjectures. Thanks also to Phillp Matchet Wood, for his assistance in understanding the binomial distribution. The author was partially supported by an NSF grant DMS-1502553.

\bibliography{smooth3}{}
\bibliographystyle{plain}

\end{document}